\newcommand{\labitem}[2]{%
\def\@itemlabel{#1}
\item
\def\@currentlabel{#1}\label{#2}}
\begin{document}
\title[Coherent orientations in symplectic field theory revisited]
{Coherent orientations in symplectic field theory revisited}

\author{Erkao Bao}
\address{School of Mathematics, University of Minnesota, Minneapolis, MN 55455}
\email{bao@umn.edu} \urladdr{https://erkaobao.github.io/math/}

\keywords{coherent orientation, contact structure, contact homology, symplectic field theory}

\subjclass[2010]{Primary 53D10; Secondary 53D40.}

\begin{abstract}
In symplectic field theory (SFT), the moduli spaces of $J$-holomorphic curves can be oriented coherently (compatible with gluing). 
In this note, we correct the signs involved in the generating function $\Ha$ in SFT so that the master equation $\Ha \cdot \Ha = 0$ holds assuming transversality. The orientation convention that we use is consistent with that of Hutchings-Taubes from \cite{hutchings2009gluing}, but differs from that of Bourgeois-Mohnke in \cite{bourgeois2004coherent}. 
\end{abstract}

\maketitle

\setcounter{tocdepth}{2}


\section{Introduction}

Symplectic Field Theory (SFT) was introduced by Eliashberg, Hofer and Givental in \cite{eliashberg2000introduction}, and is a generalization of the Gromov-Witten invariants in the spirit of a topological field theory. SFT packs a signed count of elements of moduli spaces of $J$-holomorphic curves in the symplectization of a contact manifold into a potential function $\Ha$. The potential function $\Ha$ satisfies the master equation $\Ha \cdot \Ha = 0$. There are different choices to orient the moduli spaces of $J$-holomorphic curves, such as those of Bourgeois-Mohnke \cite{bourgeois2004coherent} and Hutchings-Taubes \cite{hutchings2009gluing}.
However, the algebraic setup of $\Ha$ in \cite{eliashberg2000introduction} is not compatible with either of the two orientation conventions.

In this paper, correct the signs of $\Ha$ so that they match with the orientation convention  of \cite{hutchings2009gluing}\footnote{It is likely that a different correction of $\Ha$ can match the orientation convention of \cite{bourgeois2002morse} resulting in an equivalent SFT.}; 


In Section 2, we recall the coherent orientations for Cauchy-Riemann tuples used in \cite{hutchings2009gluing}. In Section 3, we orient moduli spaces of $J$-holomorphic curves. 
In Section 4, we correct the definition of $\Ha$ and prove $\Ha \cdot \Ha = 0$.

\section{Coherent orientations of Cauchy-Riemann tuples}

\subsection{Cauchy-Riemann tuples}

\begin{definition}
A {\em decorated Riemann surface} with $(k_+, k_-)$ marked points is a tuple $(\Sigma, j, \bs p, \bs r)$ such that
\be 
    \item $(\Sigma, j)$ is a {\em possibly disconnected} closed Riemann surface, 
    \item $\bs p = (\bs p^+, \bs p^-)$ and $\bs p^\pm = (p^\pm _1,\dots, p^\pm_{k_\pm})$ is an ordered tuple of points on $\Sigma$, 
    \item $\bs r = (\bs r^+, \bs r^-)$ and $\bs r^\pm = (r^\pm_1, \dots, r^\pm_{k_\pm})$ is an ordered  tuple of rays on $\Sigma$ at $\bs p$, i.e., $r^\pm_i \in T_{p^\pm_i} \Sigma - \{0\}$.
\ee 
\end{definition}

Given a decorated Riemann surface $(\Sigma, j, \bs p, \bs r)$,
we regard its marked points as punctures and find holomorphic cylindrical coordinates around them.
Let $\phi_i^\pm: D\subset \C \to \mathcal U_{p_i^\pm}\subset \Sigma$ be a biholomorphic map from the unit disc $D$ to a neighborhood $\mathcal U_{p_i^\pm}$ of $p^\pm_i$ such that $\phi_i^\pm(o) = p_i^\pm$ and $d\phi_i^\pm(\frac{\partial}{\partial x}) = r^\pm_i,$ where $o\in D$ is the origin and $\frac{\partial}{\partial x} \in T_o D.$ 
Let $\dot{\mathcal U}_{p_i^\pm} = {\mathcal U}_{p_i^\pm} - \{p_i^\pm\},$
and $h_i^\pm:\R^{\geq 0 (\leq 0)} \times S^1 \to \dot{\mathcal U}_{p_i^\pm}$ be the biholomorphic map defined by 
\begin{equation} \label{eqn: h_i}
    h_i^\pm (s,t) =(\phi^\pm_i)^{-1}(e^{\mp s \mp \sqrt{-1} t}).
\end{equation}

\begin{definition}[\cite{schwarz1995cohomology}]
A smooth loop of symmetric matrices $$S\in C^\infty(S^1, \op{End}(\R^{2n-2}))$$ is called {\em admissible} if the ordinary differential equation $$\dot x(t) = J_0  S(t) x(t),$$ $$x:S^1 \to \R^{2n-2}$$ has only the zero solution, where $J_0$ is the standard complex structure on $\R^{2n-2}$.
\end{definition}

\begin{definition}
A {\em Cauchy-Riemann tuple} (CR tuple for short) is a tuple $$\T = (\Sigma, j, \bs p, \bs r, E, J, \bs \psi =\{\psi_i^\pm\}_i, \bs S = \{S_i^\pm\}_i)$$ consisting of the following data:
\be
    \item A decorated Riemann surface $(\Sigma, j, \bs p, \bs r)$ with $(k_+,k_-)$ marked points. 
    \item A complex vector bundle $(E, J)$  over $\dot \Sigma = \Sigma - \bs p$, such that for each $p_i^\pm$, there exist: 
    \be 
    \item  A neighborhood $\mathcal U_{p_i^\pm}$ of $p_i^\pm$ in $\Sigma$ and a trivialization 
    \begin{equation}\label{formula: trivialization of E}
    \psi_i^\pm: (E, J)|_{\dot{\mathcal U}_{p_i^\pm}} \simeq (\R^{2n} \times \dot{\mathcal U}_{p_i^\pm}, J_0),
    \end{equation} 
    where $\dot{\mathcal U}_{p_i^\pm} = \mathcal U_{p_i^\pm} - p_i^\pm$, and $J_0$ is the standard complex structure on $\R^{2n}$.
    \item An admissible $S_i\in C^\infty(S^1, \op{End}(\R^{2n-2}))$, where $\R^{2n-2}$ is viewed as the last $(2n-2)$ factors of $\R^{2n}$. 
    \ee 
\ee 
\end{definition}


Given an admissible loop $S$, we obtain a path of symplectic matrices $B(t) \in \op{Symp}(2n-2, \R)$ that solves $\dot B(t) = J_0 S(t)B(t)$ and $B(0) = \op{Id}.$ We define the Conley-Zenhder index of $S$ by $\mu_{\op{CZ}}(S)= \mu(\{B(t)\}_t)$, where $\mu$ is the Maslov index, and we grade $S$ over $\Z_2$ by $|S| = \mu_{\op{CZ}}(S) + (n-1) \mod 2.$

\begin{lemma}[\cite{bourgeois2004coherent}]
Given an admissible loop $S\in C^\infty (S^1, \op{End}(\R^{2n-2}))$, the associated operator $A = J_0\frac{\partial}{\partial t} + S: W^{1,p}(S^1, \R^{2n-2}) \to L^p(S^1, \R^{2n-2})$ has discrete spectrum $\sigma(A) \subset \R -\{0\}.$
\end{lemma}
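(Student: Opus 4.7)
The plan is to realize $A$ as an invertible Fredholm operator whose inverse is a compact operator on $L^p$, and then extract discreteness from Riesz--Schauder together with formal self-adjointness on $L^2$.

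First I would check that $A : W^{1,p}(S^1, \R^{2n-2}) \to L^p(S^1, \R^{2n-2})$ is Fredholm of index zero. Writing $A = A_0 + M_S$ with $A_0 = J_0 \partial_t$ and $M_S$ the pointwise multiplication by $S$, a Fourier-series computation shows that $A_0$ has kernel and cokernel both equal to the constants, hence index zero. The multiplication $M_S$ factors as $W^{1,p} \hookrightarrow L^p \xrightarrow{M_S} L^p$; the first map is compact by Rellich--Kondrachov on $S^1$, so $M_S : W^{1,p} \to L^p$ is compact and $A - \lambda \op{Id}$ is Fredholm of index zero for every $\lambda \in \C$. In particular $\sigma(A)$ consists entirely of eigenvalues.

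Admissibility then forces $\ker A = 0$: if $A x = 0$ then $J_0 \dot x = -S x$, so $\dot x = J_0 S x$ (using $J_0^{-1} = -J_0$), and admissibility gives $x \equiv 0$ after noting via Sobolev embedding and bootstrapping that any $W^{1,p}$ solution of this ODE on $S^1$ is smooth. Combined with index zero, $A$ is invertible, so $A^{-1} : L^p \to W^{1,p}$ is bounded and, composed with the compact Rellich inclusion $W^{1,p} \hookrightarrow L^p$, defines a compact operator on $L^p$. Riesz--Schauder then gives that $\sigma(A^{-1}) \setminus \{0\}$ is a discrete set of eigenvalues of finite multiplicity, and the equivalence $\lambda \in \sigma(A) \iff \lambda^{-1} \in \sigma(A^{-1}) \setminus \{0\}$ transfers this to show that $\sigma(A) \subset \C \setminus \{0\}$ is discrete.

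For reality I would exploit that $A$ is formally symmetric on $L^2$: the identities $J_0^T = -J_0$ and $S^T = S$ combined with integration by parts on $S^1$ give $\langle A x, y\rangle_{L^2} = \langle x, A y\rangle_{L^2}$ for smooth $x, y$, so the standard extension makes $A$ self-adjoint on $L^2$ with domain $W^{1,2}$ and its $L^2$-spectrum is real. For $p \neq 2$, elliptic regularity on the closed manifold $S^1$ shows every eigenfunction is smooth and hence lies in $L^2$, so the $L^p$-eigenvalues coincide with the real $L^2$-eigenvalues. The main obstacle I anticipate is not conceptual depth but careful bookkeeping: one must confirm that all the familiar Hilbert-space tools (Fredholm index zero, compactness of the resolvent, the spectral mapping $\sigma(A^{-1}) \setminus \{0\} = \{\lambda^{-1} : \lambda \in \sigma(A)\}$, and independence of the spectrum from $p$) transfer cleanly to the $L^p$/$W^{1,p}$ setting, and that admissibility is precisely the obstruction to $0$ being an eigenvalue rather than some strictly stronger condition.
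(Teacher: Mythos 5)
Your proposal is correct. The paper does not prove this lemma itself but cites it from Bourgeois--Mohnke, and your argument is the standard spectral-theory route one would find in that reference or in Schwarz's thesis: $J_0\partial_t$ is a self-adjoint elliptic first-order operator on $S^1$ with compact resolvent, the symmetric zeroth-order perturbation $S$ is relatively compact and preserves self-adjointness, so the spectrum is real and discrete, and the admissibility hypothesis is exactly the statement that $0$ is not an eigenvalue (the sign bookkeeping $J_0^{-1} = -J_0$ turning $J_0\dot x + Sx = 0$ into $\dot x = J_0 S x$ is correct). The one simplification you might consider: since any $W^{1,p}$ solution of the first-order ODE $(A-\lambda)x = 0$ is automatically smooth by bootstrapping, the $L^p$-spectrum is independent of $p$ from the start, so you can carry out the entire Fredholm/compact-resolvent argument directly on $L^2$ and skip the final transfer step; but as written the argument is complete.
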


For an admissible loop $S$, 
we define $\lambda_S = \min\{-\lambda_{-1}, \lambda_1\}>0$, where $\lambda_1$ is the smallest positive eigenvalue of $J_0\frac{\partial}{\partial t} + S$, and  $\lambda_{-1}$ is the largest negative eigenvalue of $J_0\frac{\partial}{\partial t} + S$.

Fix $k\in \Z^{\geq 0}$, and $p> 1$ such that $kp > 2.$
\begin{definition}[Cauchy-Riemann operators]\label{def: CR operators}
For a CR tuple $\T$, we define $\CRO(\T)$ to be the topological space of linear operators, called {\em Cauchy-Riemann operators} (CR operators for short):
\begin{equation}\label{formula: Fredholm operator}
L: W^{k,p}_\delta (\dot \Sigma, E) \oplus \mathcal V \to W^{k-1,p}_\delta (\dot \Sigma, \wedge^{0,1}T^*\dot\Sigma \otimes_\C E ),
\end{equation}
such that: 
\be 
    \item $0< \delta < \min_i \lambda_{S_i}$, and $W^{k,p}_\delta(\cdot)$ and $W^{k-1,p}_\delta(\cdot)$ are weighted Sobolev spaces.
    \item $\mathcal V = (\oplus_{i=1}^{k_-} \mathcal V_i^-) \oplus (\oplus_{i=1}^{k_+} \mathcal V_i^+)$, where $\mathcal V^\pm_i = \op{span}\{\beta_i^\pm\cdot e_1, \beta_i^\pm\cdot e_{2}\}$,  $\{e_1,\dots, e_{2n}\}$ is the standard basis of $\R^2 \oplus \R^{2n-2}$, $0\leq \beta_i^\pm \leq 1$ is a bump function that is supported in $\mathcal U_{p_i^\pm}$ and satisfies  $\beta_i^\pm(p_i^\pm) = 1$, and hence $\beta_i^\pm\cdot e_1$ and $\beta_i^\pm\cdot e_2$ are sections of $E$.
    \item $L$ is a real Cauchy-Riemann operator (Appendix C.1 in \cite{mcduff2012j}), i.e., $L = L_0 + L_1$, 
    where $L_0\in W^{k-1,p}(\dot \Sigma, \wedge^{0,1} T^*\dot \Sigma \otimes_\R \op{End}_\R (E))$ is the $0$-th order operator, 
    and $L_1$ is a complex Cauchy-Riemann operator, that is, 
    $L_1$ is a $\C$-linear operator that satisfies the Leibnitz rule
    $$L_1 (f \eta) = f(L_1 \eta) + (\overline \partial f) \eta$$ for any $f\in C^\infty_c (\dot \Sigma)$ and $\eta \in W^{k,p}_\delta (\dot \Sigma, E) \oplus \mathcal V$. 
    \item With respect to the coordinate $h_i^\pm$ and the trivialization $\psi_i^\pm$, we require that for any $\eta \in W^{k,p}_\delta (\dot \Sigma, E) \oplus \mathcal V$ with support in ${\dot{\mathcal U}_{p_i^\pm}}$:
    \begin{equation}\label{eqn: CR operator near ends}
        L\eta(s,t) = \left(\frac{\partial \eta}{\partial s} + \widetilde J_i^\pm (s,t) \frac{\partial \eta}{\partial t} + \widetilde S_i^\pm (s,t) \eta \right)\otimes (ds - \sqrt{-1} dt),
    \end{equation}
    where $(s,t)$ is the cylindrical coordinates around $p_i^\pm$, 
    $\widetilde J_i^\pm (s,t)$ is a complex structure on $\R^{2n}$, $\widetilde S_i^\pm (s,t)\in \op{End}(\R^{2n})$, and there exist some constants $C_i^\pm > 0$ such that for all $\beta = (\beta_1,\beta_2) \in \Z^{\geq 0} \times \Z^{\geq 0}$ with $\beta_1 +\beta_2 \leq k$ one has 
    $$|\partial^\beta (\widetilde J_i^\pm (s,t)- \widehat J_0)| \leq C_i^{\pm} e^{\mp \frac{1}{2}\lambda_{S_i^\pm} s},$$  
    $$|\partial^\beta (\widetilde S_i^\pm (s,t)-\widehat S_{i}^\pm (t))| \leq C_i^{\pm} e^{\mp \frac{1}{2}\lambda_{S_i^\pm} s},$$
    where
    $\widehat J_0$ is the standard complex structure on $\R^{2n}$,
    $$
       \widehat S_i^\pm =  
       \begin{pmatrix}
       0 &  & \\
     & 0 & & \\
    & &  S_i^\pm
       \end{pmatrix}
    $$ and $\partial^\beta = \partial^{\beta_1}_s \partial^{\beta_2}_t.$ 
\ee 

\end{definition}

Note that the third term $\widetilde S_i^\pm (s,t) \eta \otimes (ds - \sqrt{-1} dt)$ on the right hand side of Formula~\eqref{eqn: CR operator near ends} is the $0$-th order operator as in (3). 


\begin{proposition}[\cite{bourgeois2002morse, bourgeois2004coherent}]
Any $L \in \CRO(\T)$ is Fredholm, and its Fredholm index is given by $$\op{ind} L = \sum_{i=1}^{k_+} \mu_{\op{CZ}}(S^+_i) - \sum_{i=1}^{k_-} \mu_{\op{CZ}}(S^-_i) - (n-1)(k_- + k_+) + 2c_1(E) + n (2-2g),$$
where $c_1(E)$ is the relative $1$-st Chern number of $E$ with respect to the trivialization $\bs \psi$. 
\end{proposition}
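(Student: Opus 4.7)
The plan is to follow the classical strategy of Schwarz~\cite{schwarz1995cohomology} and Bourgeois~\cite{bourgeois2002morse}: use homotopy invariance of the Fredholm index to deform $L$ to a convenient normal form, cut $\dot \Sigma$ into a compact core and cylindrical ends, compute the contribution of each piece, and reassemble via a linear gluing theorem.

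Any two operators in $\CRO(\T)$ differ by a zero-th order perturbation of a common reference operator with identical asymptotic data, and hence lie in a connected family of Fredholm operators of the same index. I would therefore deform $L$ to a model operator that equals the translation-invariant operator $\partial_s + J_0\partial_t + \widehat S_i^\pm$ on each cylindrical neighborhood $\dot{\mathcal U}_{p_i^\pm}$ and is complex-linear on the compact core of $\dot\Sigma$. Truncating each end along a circle $\{s = s_0\}$ and imposing compatible totally real boundary conditions, the standard linear gluing theorem yields
\begin{equation*}
    \op{ind} L \;=\; \op{ind}(L_{\Sigma_0}) \;+\; \sum_{i=1}^{k_+} \op{ind}(L_i^+) \;+\; \sum_{j=1}^{k_-} \op{ind}(L_j^-).
\end{equation*}
On the compact piece $\Sigma_0$, the operator is complex-linear with totally real boundary conditions, and Riemann-Roch gives $n\chi(\Sigma_0) + 2c_1(E)$ plus a boundary Maslov correction. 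On each end, the operator respects the splitting $\R^{2n} = \R^2 \oplus \R^{2n-2}$: the $\R^{2n-2}$ summand has admissible asymptotic loop $S_i^\pm$ and contributes $\pm \mu_{\op{CZ}}(S_i^\pm)$ by the spectral-flow formula, while the trivial $\R^2$ summand, regularized by the weight $\delta$ and augmented by $\mathcal V_i^\pm$, contributes a fixed integer that I would compute directly from the spectrum of $J_0\partial_t$ on $S^1$. Summing, the boundary Maslov terms from $\Sigma_0$ cancel against their counterparts on the ends, and substituting $\chi(\Sigma_0) = 2-2g-(k_++k_-)$ yields the claimed formula.

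The main obstacle is the bookkeeping on the trivial $\R^2$ summand at each puncture, where the asymptotic operator is zero and becomes Fredholm only through the combined effect of the weight $\delta$ and the two-real-dimensional augmentation $\mathcal V_i^\pm$. One must check that their joint contribution shifts the index by exactly $+1$ per puncture, so that together with the $(n-1)$ genuine Conley-Zehnder directions one obtains the correction $-(n-1)(k_-+k_+)$ rather than $-n(k_-+k_+)$. Sign conventions for the Maslov index at positive versus negative punctures also need to be tracked carefully to reproduce the asymmetric signs in front of $\sum \mu_{\op{CZ}}(S_i^\pm)$; beyond these two points the computation is a routine application of Riemann-Roch and the linear gluing formula.
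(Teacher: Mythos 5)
The paper offers no proof of this proposition --- it is quoted from Bourgeois's thesis and Bourgeois--Mohnke --- and your outline follows exactly the standard argument of those references: homotopy invariance within $\CRO(\T)$, deformation to a split model operator, decomposition into a compact core and weighted cylindrical ends, Riemann--Roch with totally real boundary conditions on the core, and spectral flow (Conley--Zehnder indices) on the ends, reassembled by linear gluing. Your key bookkeeping claim is also correct: with $0<\delta<\lambda_{S_i^\pm}$ the degenerate $\R^2$ factor behaves like a nondegenerate end of Conley--Zehnder index $-1$ at a positive puncture and $+1$ at a negative puncture, and the two-dimensional augmentation $\mathcal V_i^\pm$ raises the index by $2$ at each puncture, so the net contribution is $+1$ per puncture, which converts the naive correction $-n(k_-+k_+)$ into the stated $-(n-1)(k_-+k_+)$.
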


We define the $\op{mod} 2$ indices $$\op{ind}^\pm \T := \sum_{i=1}^{k_\pm} |S^\pm_i| \mod 2,$$ 
and $$\op{ind}\T :=\op{ind}^+ \T + \op{ind}^- \T = \op{ind}L \mod 2.$$




\begin{definition}[determinant line]
    For a CR operator $L$, we define its {\em determinant line} as 
 $$\det L = \wedge ^{\op{top}} \ker L \otimes \wedge^{\op{top}} (\op{coker} L)^*.$$ 
\end{definition}
  
\begin{definition}[orientation]
For a CR operator $L$, we define its {\em orientation} $o(L)$ to be a choice of a non-zero vector in $\det L$ up to positive scalar multiplication.
\end{definition}

\begin{example}\label{example: trivial CR tuple}
We call a CR tuple $\T$ {\em trivial}, if $k_+ = k_- = 1$ and $S_1^+ = S_1^-$.
For a trivial $\T$, $L \in \CRO(\T)$ is said to be {\em trivial}, if $J_1^+ = J_1^-$, and $\widetilde J^\pm_1$ and $\widetilde S^\pm_1$ are independent of $s$.
In this case, $\op{ind} L = 2$, $\op{coker} L = \{0\},$ and $\ker L$ is spanned by translation and rotation.
\end{example}

\begin{example}
When $k_- = 0 = k_+$, the operator $L$ is homotopic to a complex CR operator, whose $\ker$ and  $\op{coker}$ are complex vector spaces. We have the canonical orientation $\ocan(L)$ of $\det L$ coming from the complex structure.
\end{example}

It is convenient to state the following lemma:

\begin{lemma}[\cite{fooo} p. 676]\label{lemma: linear algebra}
Let $V$ and $W$ be Banach spaces, and $\phi:V\to W$ a linear Fredholm operator. 
Let $F \subset W$ be a finite-dimensional subspace of $W$ such that $W = \op{im}(\phi) + F$.
Then there is an isomorphism $$\det \phi \simeq  \wedge^{\op{top}} (\phi^{-1}(F)) \otimes \wedge^{\op{top}} F^*,$$
which is natural up to a positive constant.
More precisely, suppose $\phi^{-1}(F) = \ker \phi \oplus H \subseteq V$, $\{e_1, \dots, e_n\}$ is a basis of $\ker \phi$, $\{h_1, \dots, h_m\}$ is a basis of $H$, and $\{v_{1},\dots, v_\ell, \phi(h_1), \dots, \phi(h_m)\}$ is a basis of $F$.
Then the isomorphism is given by
\begin{align*}
e_1 \wedge \dots \wedge e_n  \otimes v_\ell ^* \wedge \dots \wedge v_1^* \mapsto &
e_1\wedge \dots \wedge e_n \wedge h_1\wedge \dots \wedge h_m \\
& \otimes \phi(h_m)^* \wedge \dots \wedge \phi(h_1)^* \wedge v_\ell ^* \wedge \dots \wedge v_1^* .
\end{align*}
\end{lemma}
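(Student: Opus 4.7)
The approach is to derive the claimed isomorphism from two natural short exact sequences attached to the pair $(\phi,F)$, and then verify the explicit formula by tracking basis vectors through the identification. First I would observe that $\phi^{-1}(F)$ is finite-dimensional via the short exact sequence
$$0 \to \ker\phi \to \phi^{-1}(F) \xrightarrow{\phi} F\cap \op{im}(\phi) \to 0,$$
whose outer terms are finite-dimensional ($\ker\phi$ by Fredholmness of $\phi$, and $F\cap \op{im}(\phi)\subseteq F$ by hypothesis); the chosen splitting $\phi^{-1}(F)=\ker\phi\oplus H$ makes $\phi|_H$ an isomorphism onto $F\cap \op{im}(\phi)$. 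The hypothesis $W=\op{im}(\phi)+F$ then yields a second short exact sequence
$$0 \to F\cap \op{im}(\phi) \to F \to \op{coker}(\phi) \to 0,$$
in which $\{\phi(h_1),\dots,\phi(h_m)\}$ is a basis of $F\cap\op{im}(\phi)$ and $\{[v_1],\dots,[v_\ell]\}$ is a basis of $\op{coker}(\phi)$.

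Next I would apply the standard determinant identification for a short exact sequence $0\to A\to B\to C\to 0$, under which $\wedge^{\op{top}}B\cong \wedge^{\op{top}}A\otimes \wedge^{\op{top}}C$ via $a_1\wedge\cdots\wedge a_p\wedge \tilde c_1\wedge\cdots\wedge \tilde c_q \mapsto (a_1\wedge\cdots\wedge a_p)\otimes (c_1\wedge\cdots\wedge c_q)$. Applying this to the two sequences gives
$$\wedge^{\op{top}}\phi^{-1}(F)\cong \wedge^{\op{top}}\ker\phi \otimes \wedge^{\op{top}}\phi(H), \qquad \wedge^{\op{top}}F \cong \wedge^{\op{top}}\phi(H) \otimes \wedge^{\op{top}}\op{coker}(\phi).$$
Tensoring the first with the dual of the second and contracting the factor $\wedge^{\op{top}}\phi(H)\otimes \wedge^{\op{top}}\phi(H)^*$ via the canonical pairing yields
$$\wedge^{\op{top}}\phi^{-1}(F)\otimes \wedge^{\op{top}}F^* \cong \wedge^{\op{top}}\ker\phi \otimes \wedge^{\op{top}}(\op{coker}\phi)^* = \det\phi,$$
and inverting produces the claimed map. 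The explicit formula in the statement then follows by tracking basis elements through the composition: the concatenation $e_1\wedge\cdots\wedge e_n\wedge h_1\wedge\cdots\wedge h_m$ arises from the first sequence, the concatenation $\phi(h_1)\wedge\cdots\wedge\phi(h_m)\wedge v_1\wedge\cdots\wedge v_\ell$ from the second, and the reverse orderings on $\phi(h_i)^*$ and $v_j^*$ in the statement are exactly the bookkeeping needed to match the canonical pairing of a basis with its dual under the wedge sign conventions.

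Finally I would verify naturality up to a positive constant. The only auxiliary choice is the complement $H$: any other complement is obtained by $h_i'=h_i+\sum_j a_{ij} e_j$ for some scalars $a_{ij}$, and substituting into the formula leaves both the top wedge $e_1\wedge\cdots\wedge e_n\wedge h_1\wedge\cdots\wedge h_m$ (the added terms lie in the span of the $e_j$) and the images $\phi(h_i')=\phi(h_i)$ unchanged. Changes of basis within $\ker\phi$, $H$, or $F$ multiply both sides by the same change-of-basis determinants, preserving the positive-scalar equivalence class, which is precisely the content of \emph{natural up to a positive constant}. The main obstacle in the proof is keeping the signs straight in the short exact sequence step so that the explicit formula with its reversed orderings agrees exactly with the natural isomorphism; once the sign conventions are fixed, the rest is a direct linear algebra check.
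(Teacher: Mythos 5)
The paper does not supply its own proof of this lemma; it is stated with a citation to \cite{fooo} (p.~676) and applied directly in the following paragraph, so there is no in-paper argument to compare against. On its own terms your plan is correct: the two short exact sequences
$$0 \to \ker\phi \to \phi^{-1}(F) \xrightarrow{\phi} F\cap\op{im}(\phi) \to 0, \qquad 0 \to F\cap\op{im}(\phi) \to F \to \op{coker}\phi \to 0$$
are exactly the right devices, the determinant-of-an-exact-sequence identification followed by contraction of the $\wedge^{\op{top}}\phi(H)\otimes\wedge^{\op{top}}\phi(H)^*$ factor produces an isomorphism between the two lines, and a basis chase using $\{e_i\}$, $\{h_j\}$, $\{v_k,\phi(h_j)\}$ as you set them up reproduces the displayed formula once the (reversed) dual-basis convention is pinned down.

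One place you understate what your argument actually gives is the naturality step. Rescaling any of the bases of $\ker\phi$, $H$, or $F$ scales both sides of the displayed formula by the same factor (the dual wedge scaling by the inverse determinant exactly cancels the scaling of the primal wedge), so the induced linear map of determinant lines is unchanged outright, not merely up to a positive scalar; and your observation that any other complement $H'$ of $\ker\phi$ in $\phi^{-1}(F)$ has a basis $h_i' = h_i + \sum_j a_{ij}e_j$ whose substitution leaves both $e_1\wedge\cdots\wedge e_n\wedge h_1\wedge\cdots\wedge h_m$ and the $\phi(h_i)$ unchanged shows genuine independence of the choice of $H$ as well. The phrase ``natural up to a positive constant'' in the lemma is therefore a safe underclaim, and the property the paper actually uses --- continuity of the identification in $\phi$ so that the $\det\phi_\tau$ assemble into a line bundle over $[0,1]$ --- is exactly what your construction delivers.
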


Given a continuous family of Fredholm operators $\{\phi_{\tau}\}_{\tau\in [0,1]}$, we can find a finite-dimensional space $F \subset  W$ and a subspace $U \subset W$ such that $W = F \oplus U$ and $W = \op{im}(\phi_\tau) + F$. Let $\pi_{U}: W \to U$ be the projection map.
Then the map $\pi_U \circ \phi_\tau: V \to U$ is surjective, and $\ker (\pi_U \circ \phi_\tau) = \phi_\tau ^{-1}(F).$
By Lemma~\ref{lemma: linear algebra}, we have $\det \phi_\tau \simeq \wedge^{\op{top}}(\phi_\tau^{-1}(F))\otimes \wedge ^{\op{top}} F^* \simeq \wedge^{\op{top}} \ker (\pi_U \circ \phi_\tau) \otimes \wedge^{\op{top}} F^*$. Since both $\ker (\pi_U \circ \phi_\tau)$ and $F^*$ form vector bundles over $[0,1]$, $\det \phi_\tau$ forms a line bundle over $[0,1]$.
More generally, a homotopy of CR tuples $\{L_\tau\}_{0\leq \tau \leq 1}$ induces an isomorphism $\det L_0 \simeq \det L_1$.

Since the space $\CRO(\T)$ is contractible, $\det L$ and $\det L'$ are canonically isomorphic for any $L, L' \in \CRO(\T)$. For this reason, we also write $\det \T$, $o(\T)$, and $\ocan(\T)$.


\subsection{Disjoint union}
The disjoint union $\T \coprod \T'$ of two CR tuples $\T$ and $\T'$ is defined in the obvious way. The punctures of $\dot \Sigma \coprod \dot \Sigma'$ are ordered so that the punctures of $\dot\Sigma$ come before those of $\dot\Sigma'$, and the relative orders of the punctures of $\dot \Sigma$ and $\dot \Sigma'$ are preserved, respectively.
For any $L \in \CRO (\T)$ and $L' \in \CRO(\T')$, the disjoint union map induces an isomorphism 
\begin{equation} \label{formula: L disjoint L'}
    \det L \otimes \det L' \to \det (L \coprod L'),
\end{equation}
where $L \coprod L' \in \CRO(\T \coprod \T')$ is defined as follows:
Let $\{e_1, \dots, e_n\}$ be a basis of $\ker L$, $\{f_1, \dots, f_m\}$ be a basis of $\op{coker} L$,
$\{e'_1, \dots, e'_{n'}\}$ be a basis of $\ker L'$, $\{f'_1, \dots, f'_{m'}\}$ be a basis of $\op{coker} L'$.
Then the isomorphism is given by:
 
\begin{equation}\label{eqn: disjoint union basis expression}
\begin{split}
   & e_1\wedge\dots \wedge e_n \otimes f_m^* \wedge \dots \wedge f_1^*  \otimes e'_1\wedge \dots \wedge e'_{n'}\otimes {f'_{m'}}^* \wedge \dots \wedge {f'_1}^* \\
    \mapsto & (-1)^{\op{ind} L' \cdot \dim(\op{coker}L)} e_1\wedge\dots \wedge e_n \wedge  e'_1\wedge \dots \wedge e'_{n'} \\
   & \otimes {f'_{m'}}^* \wedge \dots \wedge {f'_1}^* \wedge f_m^* \wedge \dots \wedge f_1^*. 
\end{split} 
\end{equation}

In the case when both $L$ and $L'$ are surjective, $L \coprod L'$ is also surjective, and the  isomorphism simplifies to 
$$e_1 \wedge \dots \wedge e_n \otimes e'_1 \wedge \dots \wedge e'_{n'} \mapsto e_1 \wedge \dots \wedge e_n \wedge e'_1 \wedge \dots \wedge e'_{n'}.$$

\begin{remark}
    This sign $(-1)^{\op{ind} L' \cdot \dim(\op{coker}L)}$ is the same sign that comes from ``passing" the term $f_m^* \wedge \dots \wedge f_1^*$ in the right hand side of Formula~\eqref{eqn: disjoint union basis expression} across the term $e'_1\wedge \dots \wedge e'_{n'}  \otimes {f'_{m'}}^* \wedge \dots \wedge {f'_1}^*$.
\end{remark} 

The isomorphism \eqref{formula: L disjoint L'} is continuous with respect to the homotopy of CR operators $L$ and $L'$,
hence induces a map on CR tuples $\T$ and $\T'$:
\[
     \det \T \otimes \det \T'   \to \det (\T \coprod \T') 
\]
and we denote the image of $v\otimes v'$ by $v\coprod v'$.

Note that $\det (\T \coprod \T')$ can be canonically identified with $\det (\T' \coprod \T)$ by identifying  the disconnected Riemann surface $\Sigma \coprod \Sigma'$
with $\Sigma' \coprod \Sigma$, and identifying the bundles $E\coprod E'$ with $E' \coprod E$ in the obvious way.
Under such identification, 
for any $v\in \det \T$ and $v' \in \det \T'$,
$v \coprod v'$ and $v' \coprod v$ lie in the same vector space.
The following lemma is clear from the above definition.

\begin{lemma} \label{lemma: disjoint union}
$$v \coprod v' = (-1)^{\op{ind}\T  \op{ind}\T'} v' \coprod v,$$
for any $v\in \det \T$ and $v' \in \det \T'$.
\end{lemma}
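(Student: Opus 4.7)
The plan is to unpack definition \eqref{eqn: disjoint union basis expression} on both sides and collect signs. By linearity of the determinant, it suffices to verify the identity for decomposable elements $v = e_1\wedge\cdots\wedge e_n \otimes f_m^*\wedge\cdots\wedge f_1^*$ and $v' = e'_1\wedge\cdots\wedge e'_{n'}\otimes {f'_{m'}}^*\wedge\cdots\wedge {f'_1}^*$ arising from chosen bases of $\ker L$, $\op{coker} L$, $\ker L'$, $\op{coker} L'$ for some $L\in\CRO(\T)$ and $L'\in\CRO(\T')$. Set $n=\dim\ker L$, $m=\dim\op{coker}L$, $n'=\dim\ker L'$, $m'=\dim\op{coker}L'$.

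Applying \eqref{eqn: disjoint union basis expression} directly expresses $v\coprod v'$ as $(-1)^{\op{ind}L'\cdot m}$ times the wedge in which the unprimed kernel vectors come first and the primed cokernel duals come first. Applying the same formula with the roles of $(L,L')$ swapped expresses $v'\coprod v$ as $(-1)^{\op{ind}L\cdot m'}$ times the wedge in which the primed kernel vectors come first and the unprimed cokernel duals come first. The canonical identification of $\det(\T\coprod\T')$ with $\det(\T'\coprod\T)$ is literally the identity on the underlying kernel and cokernel vector spaces (just a relabelling of the two summands), so the two expressions live in a single one-dimensional space and can be compared by reordering into a common normal form: permuting the primed block of $n'$ kernel vectors past the $n$ unprimed kernel vectors contributes $(-1)^{nn'}$, and permuting the $m$ unprimed cokernel duals past the $m'$ primed cokernel duals contributes $(-1)^{mm'}$.

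Combining the four sign contributions shows that $v'\coprod v$ equals $(-1)^{\op{ind}L'\cdot m + \op{ind}L\cdot m' + nn' + mm'}$ times $v\coprod v'$. Using $\op{ind}L\equiv n-m\pmod 2$ and $\op{ind}L'\equiv n'-m'\pmod 2$, a short mod-$2$ simplification collapses the exponent to
$$(n-m)(n'-m') \;=\; \op{ind}L\cdot\op{ind}L' \;\equiv\; \op{ind}\T\cdot\op{ind}\T'\pmod 2,$$
which is the desired identity. The only real obstacle is sign bookkeeping; the content is purely linear algebra, and there is no conceptual difficulty once one commits to a single basis of $\det(\T\coprod\T')$ and expresses both sides in it. The one subtlety worth flagging in the write-up is that the canonical identification of $\det(\T\coprod\T')$ with $\det(\T'\coprod\T)$ introduces no additional sign of its own, so that all signs come from the disjoint-union formula \eqref{eqn: disjoint union basis expression} and from the rearrangement of the two wedge products.
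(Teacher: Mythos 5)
Your computation is correct and is exactly the argument the paper intends: the paper offers no written proof beyond asserting the lemma is ``clear from the above definition,'' and your sign bookkeeping from Formula~\eqref{eqn: disjoint union basis expression} — the two prefactors $(-1)^{\op{ind}L'\cdot m}$, $(-1)^{\op{ind}L\cdot m'}$ plus the block-swap signs $(-1)^{nn'}$ and $(-1)^{mm'}$, collapsing mod $2$ to $(n-m)(n'-m')=\op{ind}\T\cdot\op{ind}\T'$ — is precisely the verification being left to the reader. Your remark that the canonical identification of $\det(\T\coprod\T')$ with $\det(\T'\coprod\T)$ contributes no sign is also the right point to flag, and matches the paper's setup.
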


\subsection{Gluing} \label{section: gluing}
Given two CR tuples $\T$ and $\T'$, and $L \in \CRO(\T)$ and $L' \in \CRO(\T')$,
a positive integer $\tau \leq \op{min}(k_-, k_+')$, and a sufficiently large gluing parameter $R > 0$, if the first $\tau$ negative ends of $\dot \Sigma$ match the last $\tau$ positive ends of $\dot \Sigma'$, i.e.,
for $1\leq i\leq \tau$, 
\[S_i^- = {S'}_{k'_+ - \tau +i}^{+},\]
then we can glue $\T$ and $\T'$ to obtain a new CR tuple $\T'',$
and glue $L$ and $L'$ to obtain $L'' \in \CRO (\T'')$.
The gluing construction is straightforward: for each $1\leq i \leq \tau$ we ``chop off" the end $(-\infty, -2R]\times S^1$ from $\dot \Sigma$ around $p_i^-$, and the end $[2R, \infty) \times S^1$ from $\dot \Sigma'$ around $p^+_{k'_+ -\tau + i}$ , identify the regions $[-2R, -R]\times S^1 \subset \dot \Sigma$ and $[R, 2R]\times S^1\subset \dot \Sigma'$, and the bundles $E$ and $E'$ above these regions, and interpolate $L$ and $L'$ over the identified regions.
We now explain how $\bs p''$ is ordered. 
The positive (negative) marked points of $\bs p'$ are ordered before the positive (negative) marked points of $\bs p$, more precisely,
$\bs p''_+ = ({p'}_+^1, \dots, {p'}_+^{k'_+ - \tau}, {p}_+^{1}, \dots, {p}_+^{k_+})$ and $\bs p_-'' = ({p'}_-^1, \dots, {p'}_-^{k'_-}, p_-^{\tau + 1}, \dots, p_-^{k_-})$. We denote $\T'' = \T \sharp_{\tau, R} \T'$ and $L'' = L \sharp_{\tau, R} L'$.

We follow the {\em complete gluing} convention in \cite{hutchings2009gluing} by restricting the gluing to the case when $k_- = k_+' = \tau$, i.e., all the negative punctures of $\dot \Sigma$ match with all the positive punctures of $\dot \Sigma'$ and we glue along all of them. For complete gluing we write $\T'' = \T \sharp_R \T'$ and $L'' = L \sharp_R L'$.
To get a non-complete gluing from complete gluing, one can add a few trivial CR tuples before gluing in the obvious way. See Figure~\eqref{fig: adding trivial cylinders}. 

\begin{figure} 
\centering
\begin{tikzpicture}[scale = 0.6, every node/.style={scale=0.6}]

\draw (0,4) ellipse (1 and 0.2);
\draw (4,4) ellipse (1 and 0.2);
\draw (12,4) ellipse (1 and 0.2);

\draw (0,1) ellipse (1 and 0.2);
\draw (4,1) ellipse (1 and 0.2);
\draw (8,1) ellipse (1 and 0.2);
\draw (12,1) ellipse (1 and 0.2);
\draw (16,1) ellipse (1 and 0.2);

\draw (0,-1) ellipse (1 and 0.2);
\draw (4,-1) ellipse (1 and 0.2);
\draw (8,-1) ellipse (1 and 0.2);
\draw (12,-1) ellipse (1 and 0.2);
\draw (16,-1) ellipse (1 and 0.2);

\draw (16,-4) ellipse (1 and 0.2);

\draw (4,-4) ellipse (1 and 0.2);
\draw (8,-4) ellipse (1 and 0.2);

\draw (7,1) .. controls (7,2) and (11,3) .. (11,4);
\draw (17,1) .. controls (17,2) and (13,3) .. (13,4);
\draw (1+8,1) .. controls (1+8, 2) and (3+8, 2) .. (3+8,1);
\draw (1+12,1) .. controls (1+12, 2) and (3+12, 2) .. (3+12,1);

\draw (1,-1) .. controls (1, -2) and (3, -2) .. (3,-1);
\draw (5,-1) .. controls (5,-2) and (7,-2) .. (7,-1);
\draw (5+4,-1) .. controls (5+4,-2) and (7+4,-2) .. (7+4,-1);
\draw (5, -4) .. controls (5, -3) and (7,-3) .. (7,-4);

\draw (-1,-1) .. controls (-1,-2) and (3,-3) .. (3,-4);
\draw (13, -1) .. controls (13,-2) and (9,-3) .. (9,-4);

\draw (-1,1) -- (-1,4);
\draw (1,1) -- (1,4);
\draw (3,1) -- (3,4);
\draw (5,1) -- (5,4);

\draw (15, -1) -- (15, -4);
\draw (17, -1) -- (17, -4);

\end{tikzpicture}

\caption[]{Taking disjoint union with trivial cylinders before gluing.}
\label{fig: adding trivial cylinders}
\end{figure}
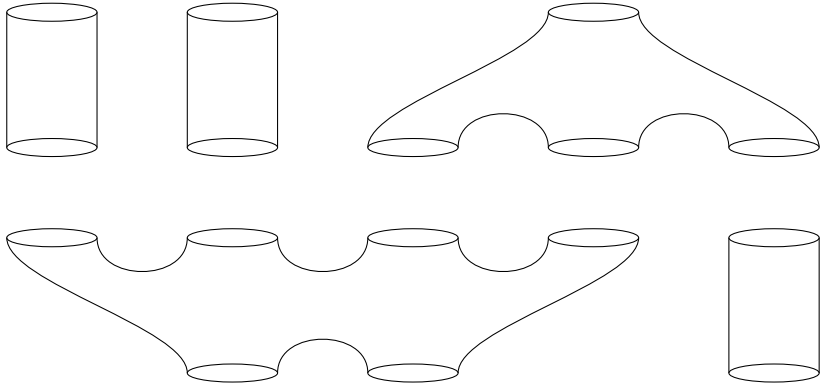


{\em From now on, all gluings are assumed to be complete unless otherwise specified.}

\begin{lemma}[Corollary 7 in \cite{bourgeois2004coherent}, Lemma 9.6 in \cite{hutchings2009gluing} and Lemma A.7 in \cite{hutchings2022s1equivariant}] \label{lemma: gluing}
The gluing map induces an isomorphism $$\det L \otimes \det L' \to  \det (L \sharp_R L'),$$
which is continuous with respect to the homotopies of CR operators $L$ and $L'$, as well as the gluing parameter $R$.
\end{lemma}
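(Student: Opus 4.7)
The plan is to reduce to a finite-dimensional statement via Lemma~\ref{lemma: linear algebra}. First, choose finite-dimensional subspaces $F$ and $F'$ of the codomains of $L$ and $L'$ whose elements are smooth and compactly supported in $\dot\Sigma$ and $\dot\Sigma'$ respectively, away from the cylindrical ends to be glued, and such that $\op{im}(L)+F$ and $\op{im}(L')+F'$ equal the full codomains. For $R$ large, the supports of $F$ and $F'$ sit disjointly from the neck of the glued surface, so $F\oplus F'$ embeds canonically into the codomain of $L\sharp_R L'$. The standard linear gluing estimate (as in \cite{bourgeois2004coherent,hutchings2009gluing}) further gives $\op{im}(L\sharp_R L') + (F\oplus F') = W^{k-1,p}_\delta$ once $R$ is sufficiently large. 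Applying Lemma~\ref{lemma: linear algebra} to each of $L$, $L'$, and $L\sharp_R L'$, the desired isomorphism of determinant lines is reduced to producing, canonically up to positive scalar, a linear isomorphism
$$L^{-1}(F) \oplus (L')^{-1}(F') \;\xrightarrow{\sim}\; (L\sharp_R L')^{-1}(F\oplus F'),$$
together with the tautological identification on the $\wedge^{\op{top}} F^* \otimes \wedge^{\op{top}} (F')^*$ factor.

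The isomorphism is built by pregluing plus a Newton correction. Given $(\eta,\eta')$ on the left, splice $\eta$ and $\eta'$ using cut-off functions in the neck to obtain an approximate section $\eta\#_R\eta'$ of the glued bundle. Since $F$ and $F'$ are supported away from the neck, $(L\sharp_R L')(\eta\#_R\eta')$ lies in $F\oplus F'$ up to an error of order $e^{-cR}$ in $W^{k-1,p}_\delta$. Using a uniformly bounded right inverse of $L\sharp_R L'$ on the $L^2$-orthogonal complement of the approximate kernel, a standard contraction-mapping argument corrects $\eta\#_R\eta'$ to an honest element of $(L\sharp_R L')^{-1}(F\oplus F')$. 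The resulting map is injective with an error that decays in $R$. Because the Fredholm indices satisfy $\op{ind}(L\sharp_R L') = \op{ind}(L) + \op{ind}(L')$, counting dimensions gives $\dim L^{-1}(F) + \dim (L')^{-1}(F') = \dim (L\sharp_R L')^{-1}(F\oplus F')$, so injectivity upgrades to an isomorphism for $R$ large.

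The main obstacle is the analytic input: for $R$ large one needs a right inverse of $L\sharp_R L'$ on the complement of the approximate kernel whose operator norm is bounded uniformly in $R$. This is the standard linear gluing estimate, and is precisely the content of Corollary~7 of \cite{bourgeois2004coherent}, Lemma~9.6 of \cite{hutchings2009gluing}, and Lemma~A.7 of \cite{hutchings2022s1equivariant}. Granting it, continuity of the isomorphism $\det L \otimes \det L' \to \det(L\sharp_R L')$ in the gluing parameter $R$ is immediate, since the pregluing map, the bounded right inverse, and the Newton correction all depend continuously on $R$. Continuity under homotopies $\{L_\sigma\}$ and $\{L'_\sigma\}$ follows from the canonical nature of Lemma~\ref{lemma: linear algebra}, provided $F$ and $F'$ are chosen large enough that $\op{im}(L_\sigma) + F$ and $\op{im}(L'_\sigma) + F'$ exhaust the codomains simultaneously for all $\sigma$ in any fixed compact parameter interval.
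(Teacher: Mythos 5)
The paper itself gives no proof of this lemma (it is quoted from \cite{bourgeois2004coherent}, \cite{hutchings2009gluing}, \cite{hutchings2022s1equivariant}, with only the shape of the map in the surjective case recorded in Remark~\ref{rmk: different choices of gluing}), so the comparison is with the standard arguments in those references. Your outline follows the usual stabilization-plus-pregluing scheme, but it has a genuine gap in the functional-analytic setup of Definition~\ref{def: CR operators}: the index additivity you invoke, $\op{ind}(L\sharp_R L')=\op{ind}L+\op{ind}L'$, is false here. Because the domain is the weighted space stabilized by the two-dimensional spaces $\mathcal V_i^\pm$ of asymptotically constant sections at \emph{every} puncture, each matched pair of ends that disappears under gluing costs two index units: a direct computation from the index formula (using additivity of the Euler characteristic) gives $\op{ind}L+\op{ind}L'-\op{ind}(L\sharp_R L')=2\tau$, where $\tau$ is the number of glued ends. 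Example~\ref{example: trivial CR tuple} already exhibits this: gluing two trivial cylinders, each of index $2$, yields a trivial cylinder of index $2$, not $4$. Consequently your dimension count fails, and the asserted isomorphism $L^{-1}(F)\oplus (L')^{-1}(F')\simeq (L\sharp_R L')^{-1}(F\oplus F')$ cannot exist; relatedly, pregluing elements whose asymptotic constants (their components in the $\op{span}\{e_1,e_2\}$ directions) disagree at a matched end does not produce an approximate solution with small error in the weighted norm, so the Newton correction step is not available as stated.

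The missing ingredient is exactly the matching condition at the glued ends: at each of the $\tau$ necks there is a difference-of-asymptotic-constants map with values in $\R^2\simeq\C$, and the correct linear gluing statement is an isomorphism $\det L\otimes\det L'\simeq \det(L\sharp_R L')\otimes\wedge^{\op{top}}(\C^{\tau})$ (equivalently an exact sequence encoding the mismatch), after which the canonical complex orientation of these $\tau$ lines — which are the linearized neck-length/twist gluing parameters — is used to obtain the isomorphism of the lemma. This is how the cited proofs proceed; your scheme would be fine in a setting with nondegenerate asymptotics in all directions and no $\mathcal V$-stabilization (e.g.\ the ECH setting of \cite{hutchings2009gluing}), where indices do add. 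A smaller inaccuracy: Corollary~7 of \cite{bourgeois2004coherent} and Lemma~9.6 of \cite{hutchings2009gluing} are the determinant gluing isomorphism itself, not merely the uniform right-inverse estimate, so attributing only the latter to them understates what remains to be proved in your argument.
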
 
\begin{remark}\label{rmk: different choices of gluing}
    When both $L$ and $L'$ are surjective, the isomorphism $\det L \otimes \det L' \to \det (L \sharp_R L')$ by the following steps:
    \begin{equation}\label{equation: gluing map in basis}
        e_1\wedge \dots \wedge e_n \otimes e_1' \wedge \dots \wedge e_{n'}' \mapsto \tilde e_1 \wedge \dots \wedge \tilde e_n \wedge \tilde e_1' \wedge \dots \wedge \tilde e_{n'}',
    \end{equation}
    where $\tilde e_i$ is obtained by:
    \begin{enumerate}
        \item Translating $e_i$ by $R$ to obtain $e_{i,R}$.
        \item Multiplying $e_{i,R}$ by a cutoff function $\beta_i$ that is 0 near the puncture.
        \item Identifying $\beta_i e_{i,R}$ as a section over $\dot \Sigma''$.
        \item Projecting $\beta_i e_{i,R}$ to $\ker L''$. The image of $\beta_i e_{i,R}$ under the projection is denoted as $\tilde e_i$.
    \end{enumerate}
    Similarly, $\tilde e_j'$ is constructed.
\end{remark} 

Because of Lemma~\ref{lemma: gluing}, we write $\sharp$ instead of $\sharp_R$, meaning gluing with some unspecified gluing parameter.
The gluing map induces an isomorphism $$\det \T \otimes \det \T' \to  \det (\T \sharp \T'),$$
and we denote the image of $v\otimes v'$ under the isomorphism as $v\sharp v'$.

\begin{lemma}[Lemma 9.7 in \cite{hutchings2009gluing}]
The gluing operation for CR tuples is associative up to homotopy,
 as is the induced operation on determinants:
$$(v\sharp v')\sharp v'' = v \sharp (v' \sharp v'')$$
for any $v \in \det \T$, $v' \in \det \T'$, and $v'' \in \det \T''$.
\end{lemma}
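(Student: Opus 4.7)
The plan is to reduce to the surjective case via stabilization, and then invoke the explicit description of the gluing isomorphism given in Remark~\ref{rmk: different choices of gluing}. The argument naturally splits into two parts: first, show that the two iteratively glued CR tuples produce homotopic operators; second, verify that the two induced isomorphisms of determinant lines agree on $v \otimes v' \otimes v''$.

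For the first part, for complete gluings the underlying decorated Riemann surface of either $(\T \sharp \T') \sharp \T''$ or $\T \sharp (\T' \sharp \T'')$ has positive punctures inherited from $\T$ and negative punctures from $\T''$, with the same ordering. For sufficiently large gluing parameters the two triple-glued operators differ only in how the cylinders are interpolated near the two pairs of matched ends, and these interpolations can be deformed through a continuous family of CR operators. By Lemma~\ref{lemma: gluing} this induces a canonical isomorphism $\det((\T \sharp \T') \sharp \T'') \simeq \det(\T \sharp (\T' \sharp \T''))$, and what remains is to identify the image of $(v \sharp v') \sharp v''$ under this isomorphism.

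For the agreement of the isomorphisms on basis vectors, first suppose that $L, L', L''$ are all surjective; then by Lemma~\ref{lemma: gluing} the pairwise and triple glued operators are also surjective for sufficiently large gluing parameters. The formula of Remark~\ref{rmk: different choices of gluing} constructs the glued basis by translating kernel elements by the gluing parameter, multiplying by a cutoff supported away from the matched cylinders, interpreting the result as a section on the glued surface, and projecting onto the kernel of the glued operator. For large enough gluing parameters the cutoffs for the two pairs of matched ends have disjoint support on $\dot \Sigma'$, so the translate-and-cutoff steps commute with the order of gluing, and the two triple-glued bases have the same asymptotic profile at all three sets of ends. By continuity of the gluing isomorphism in the gluing parameters, the two resulting wedge products define the same element of $\det(L \sharp L' \sharp L'')$; the wedge-ordering $(\T, \T', \T'')$ from left to right is preserved in both associations, so no extra sign appears.

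For general operators, reduce to the surjective case via stabilization: pick finite-dimensional subspaces $F \subset W$, $F' \subset W'$, $F'' \subset W''$ with $\op{im}(L) + F = W$ and similarly for $L', L''$, and apply Lemma~\ref{lemma: linear algebra} to identify each determinant line with $\wedge^{\op{top}} L^{-1}(F) \otimes \wedge^{\op{top}} F^*$. The space $F \oplus F' \oplus F''$ stabilizes the triple-glued operator in an order-independent way, reducing the check to the surjective case above. The main technical obstacle is bookkeeping the signs arising from permuting cokernel factors past kernel factors, since Formula~\eqref{eqn: disjoint union basis expression} introduces signs of the form $(-1)^{\op{ind} \cdot \dim \op{coker}}$, and these must be shown to coincide for the two parenthesizations. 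This follows from the fact that the left-to-right order of $\T, \T', \T''$, and hence of their cokernel factors, is preserved by both associations, so the accumulated signs cancel in the comparison.
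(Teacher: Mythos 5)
The paper does not prove this lemma itself; it is cited directly as Lemma 9.7 of Hutchings--Taubes \cite{hutchings2009gluing}, so there is no ``paper's own proof'' to compare against. With that caveat, your outline is in line with what the cited reference does: identify the underlying triple-glued CR tuples up to homotopy, and check agreement on determinants by working through the explicit kernel-gluing construction, reducing to the surjective case.

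Within the surjective case your argument is essentially correct: the translate-cutoff-project recipe of Remark~\ref{rmk: different choices of gluing} places the kernel vectors of $L$, $L'$, $L''$ in the same left-to-right order under either parenthesization, and for large gluing parameters the intermediate projections agree up to positive scalars, so no sign appears. One point you should make explicit is that the translation amounts applied to a given kernel vector of $L'$ differ between the two parenthesizations (it is pushed by $-R_1$ and then by $+R_2$ in one order, and only by $+R_2'$ in the other), so the resulting glued kernel vectors are not literally equal; you need the continuity of the gluing isomorphism in the gluing parameters together with contractibility of $\CRO(\T'')$ to conclude they define the same orientation.

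The non-surjective case is where the argument has a genuine gap. You propose to stabilize by finite-dimensional spaces $F, F', F''$ and invoke Lemma~\ref{lemma: linear algebra}, then assert that ``the accumulated signs cancel in the comparison'' because the left-to-right order of the three factors is preserved. That is not an argument; the signs $(-1)^{\op{ind}\cdot \dim \op{coker}}$ arising when you commute cokernel duals past kernel vectors depend on which pair you glue first, and in the intermediate step $(L\sharp L')\sharp L''$ the cokernel of $L\sharp L'$ (before further stabilization) is not simply the direct sum of $\op{coker} L$ and $\op{coker} L'$. You also appeal to Formula~\eqref{eqn: disjoint union basis expression}, which governs disjoint union rather than gluing, so it does not directly control the gluing isomorphism's signs. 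To close this gap one should either (i) take as the definition of the general gluing isomorphism its stabilization to the surjective case and verify once that this definition is independent of the stabilizing data, after which associativity reduces cleanly to the surjective computation; or (ii) carefully track the orientations of $\phi^{-1}(F)\otimes F^*$ through both parenthesizations, including the signs from reordering the three cokernel-dual blocks. As written, the sign cancellation is asserted rather than proved.
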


\subsection{Construction of coherent orientation systems.}
To construct a coherent orientation system for all CR tuples $\T$ with loops of admissible symmetric matrices from $\{S_1, S_2, \dots\}$, we follow the method outlined in \cite{bourgeois2004coherent}. This involves the following steps: 
\be 
    \item For each loop of admissible symmetric matrices $S$, we perform the following: 
            \be 
            \item Choose a CR tuple $\T^+_S$ with one positive puncture, zero negative punctures, and the associated admissible loop $S$.
            \item Select an orientation $o(\T^+_S)$.
            \item Pick a CR tuple $\T^-_S$ with zero positive puncture and one negative puncture and the associated admissible loop being $S$.
            \item Determine the orientation $o(\T^-_S)$ by gluing $\T^+_S$ and $\T^-_S$ together. Specifically, the equation $o(\T^+_S) \sharp o(\T^-_S) = \ocan(\T^+_S \sharp \T^-_S)$ uniquely determines $o(\T^-_S)$.
        \ee
\ee 

            We refer to $\T_S^\pm$ as the {\em positive (negative) capping CR tuple} and $o(\T^\pm_S)$ as the {\em positive (negative) capping orientation}. 
\be 
    \item[(2)] For an arbitrary CR tuple $\T$, we define $\T^\pm_{\bs S^\pm}$ as the CR tuple obtained by taking the iterated disjoint union of $\T^\pm_{S^\pm_1} \coprod \dots \coprod \T^\pm_{S^\pm_{k^\pm}}$, where $S^\pm_i$ is the admissible loop of symmetric matrices at the $i$-th positive (negative) puncture. Then, the orientation $o(\T^\pm_{\bs S^\pm})$ is determined by the equation:  
        \begin{equation}\label{equation: orientation of bs S}
        o(\T^\pm_{\bs S^\pm}) = \dconst \cdot o(\T^\pm_{S_1^\pm}) \coprod \dots \coprod o(\T^\pm_{S_{k^\pm}^\pm}),
        \end{equation}
        where $\dconst = \dconst(\T^\pm_{S_1^\pm}, \dots, \T^\pm_{S_{k^\pm}^\pm}) \in \{\pm 1\}$ is to be chosen.  
    \item[(3)] Finally, we glue $\T^+_{\bs S^+}$, $\T$, and $\T^-_{\bs S^-}$ together to obtain a CR tuple with no punctures. The orientation $o(\T)$ is determined by the equation: 
    \begin{equation}\label{equation: orientation of T} 
        o(\T_{\bs S^+}^+) \sharp o(\T) \sharp o(\T^-_{\bs S^-}) = \gconst \cdot \ocan (\T_{\bs S^+}^+ \sharp \T \sharp \T^-_{\bs S^-}),
    \end{equation}
    where $\gconst = \gconst(\T^+_{\bs S^+}, \T, \T^-_{\bs S^-}) \in \{\pm 1\}$  is to be chosen. 
\ee 

 The choices of $\dconst$ and $\gconst$ in \cite{bourgeois2004coherent} and in \cite{hutchings2009gluing} are different,
resulting in different properties of the coherent orientations. We list the properties and the choices of $\dconst$ and $\gconst$ of coherent orientations $o_{bm}$ in \cite{bourgeois2004coherent} and $o_{ht}$ in \cite{hutchings2009gluing} below. The signs $\dconst$ and $\gconst$ in Formulas~\eqref{equation: orientation of bs S} and \eqref{equation: orientation of T} come from disjoint union and gluing of multiple (possibly greater than two) CR tuples, respectively. 
They are determined inductively by the signs that come from disjoint union and gluing two CR tuples. 

\begin{theorem}[\cite{bourgeois2004coherent}]
There exists a choice of orientations $o_{bm}$ for all CR tuples such that: \be
    \item The gluing map is orientation-preserving with a sign correction: $$o_{bm}(\T)\sharp o_{bm}(\T') = \gconst_{bm}\cdot o_{bm}( \T \sharp \T'),$$
    where $$\gconst_{bm} = (-1)^{ \sum_{1\leq a < b \leq k_-} |S_a^-|\cdot |S_b^-| }.$$ The sign $(-1)^{ \sum_{1\leq a < b \leq k_-} |S_a^-|\cdot |S_b^-| }$ arises from reversing the ordering of negative ends of $\T$. 
    \item The disjoint union map is orientation-preserving  with a sign correction: 
        $$o_{bm}(\T) \coprod o_{bm} (\T') = \dconst_{bm} \cdot o_{bm}(\T \coprod \T'),$$
        where $$\dconst_{bm} = (-1)^{\op{ind}^-\T \cdot \op{ind}^+ \T'}.$$
    \item When $k_- + k_+ = 0$, $o_{bm}(\T) = \ocan(\T).$ 
    \item An isomorphism between CR tuples preserves orientation. 
\ee 
\end{theorem}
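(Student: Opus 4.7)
The plan is to construct $o_{bm}$ via the three-step procedure with the simplest conventions in the construction itself: take $\dconst = 1$ in Formula~\eqref{equation: orientation of bs S} and $\gconst = 1$ in Formula~\eqref{equation: orientation of T}. For each admissible loop $S$, pick arbitrary capping data $\T^\pm_S$ and an arbitrary orientation $o_{bm}(\T^+_S)$; step (1)(d) then uniquely determines $o_{bm}(\T^-_S)$. With these choices the resulting orientation is characterized by
\[
o_{bm}(\T^+_{\bs S^+}) \sharp o_{bm}(\T) \sharp o_{bm}(\T^-_{\bs S^-}) = \ocan(\T^+_{\bs S^+} \sharp \T \sharp \T^-_{\bs S^-}),
\]
together with the factoring $o_{bm}(\T^\pm_{\bs S^\pm}) = o_{bm}(\T^\pm_{S_1^\pm}) \coprod \cdots \coprod o_{bm}(\T^\pm_{S_{k^\pm}^\pm})$. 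Property (3) is then immediate: when $k_+ = k_- = 0$ there is nothing to cap and the defining equation collapses to $o_{bm}(\T) = \ocan(\T)$. Property (4) follows from the naturality of every ingredient in the construction under isomorphism (using in particular the contractibility of $\CRO(\T)$).

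For property (2), I would compare the defining equation for $o_{bm}(\T \coprod \T')$ with the disjoint union of the defining equations for $o_{bm}(\T)$ and $o_{bm}(\T')$. The direct definition assembles the caps as $o_{bm}(\T^+_{\bs S^+}) \coprod o_{bm}(\T^+_{\bs S'^+})$ on top and $o_{bm}(\T^-_{\bs S^-}) \coprod o_{bm}(\T^-_{\bs S'^-})$ on bottom, whereas taking the disjoint union of the two separate expressions interleaves the caps with $\T$ and $\T'$. Passing $o_{bm}(\T^-_{\bs S^-})$ past $o_{bm}(\T^+_{\bs S'^+})$ in the disjoint union (so that the positive caps are bundled together and the negative caps are bundled together) costs, by Lemma~\ref{lemma: disjoint union}, the sign $(-1)^{\op{ind} \T^-_{\bs S^-} \cdot \op{ind} \T^+_{\bs S'^+}} = (-1)^{\op{ind}^-\T \cdot \op{ind}^+\T'}$, exactly $\dconst_{bm}$.

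For property (1), the strategy is to compute $o_{bm}(\T) \sharp o_{bm}(\T')$ from the defining equations for $o_{bm}(\T)$ and $o_{bm}(\T')$ and compare with $o_{bm}(\T \sharp \T')$. Glue the two defining equations together and apply associativity of $\sharp$ to rearrange the product so that the middle block $o_{bm}(\T^-_{\bs S^-}) \sharp o_{bm}(\T^+_{\bs S^-})$ (using $\bs S^- = \bs S'^+$) sits between $\T$ and $\T'$; this block pairs up index-by-index into a disjoint union of closed CR tuples $\coprod_i (\T^-_{S_i^-} \sharp \T^+_{S_i^-})$ whose canonical orientation is fixed by step (1)(d). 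Reconciling the pairing forced by the gluing with the one used to normalize step (1)(d) requires reversing the ordering of the $k_-$ factors of $\T^-_{\bs S^-}$ (equivalently, reversing the ordering of the negative ends of $\T$); by iterated application of Lemma~\ref{lemma: disjoint union}, this reversal introduces the sign $(-1)^{\sum_{1\leq a<b\leq k_-}|S_a^-|\cdot |S_b^-|} = \gconst_{bm}$.

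The main obstacle is tracking three distinct sources of signs simultaneously: the asymmetric disjoint-union sign $(-1)^{\op{ind} L' \cdot \dim(\op{coker} L)}$ of Formula~\eqref{eqn: disjoint union basis expression}, the commutativity sign of Lemma~\ref{lemma: disjoint union}, and the sign bookkeeping when iterating $\sharp$ and $\coprod$ (which requires a cross-compatibility between Lemmas~\ref{lemma: disjoint union} and \ref{lemma: gluing}). A convenient organization is to verify (1) and (2) first for two CR tuples, then extend by induction; after this reduction the stated signs reduce to a straightforward combinatorial computation on the parities $|S_i^\pm|$ and the mod-$2$ indices $\op{ind}^\pm\T$.
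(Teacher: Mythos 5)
This theorem is stated in the paper with a citation to \cite{bourgeois2004coherent} and is not proved internally, so there is no ``paper's own proof'' to compare against; the paper simply records the result for reference. What you have written is therefore a reconstruction, and the overall architecture you propose (normalize the construction by taking $\dconst = \gconst = 1$ in Formulas~\eqref{equation: orientation of bs S} and \eqref{equation: orientation of T}, then derive properties (1)--(4) by comparing defining equations) is the sensible route.

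However, there are two genuine gaps. First, your verification of property (2) reduces the entire sign to a single transposition --- moving $o_{bm}(\T^-_{\bs S^-})$ past $o_{bm}(\T^+_{\bs S'^+})$ --- but when you pass from $\bigl[o(\T^+_{\bs S^+}) \sharp o(\T) \sharp o(\T^-_{\bs S^-})\bigr] \coprod \bigl[o(\T^+_{\bs S'^+}) \sharp o(\T') \sharp o(\T^-_{\bs S'^-})\bigr]$ to $\bigl[o(\T^+_{\bs S^+}) \coprod o(\T^+_{\bs S'^+})\bigr] \sharp \bigl[o(\T) \coprod o(\T')\bigr] \sharp \bigl[o(\T^-_{\bs S^-}) \coprod o(\T^-_{\bs S'^-})\bigr]$ you are not merely supercommuting factors: you are interchanging a $\sharp$-product with a $\coprod$-product. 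The isomorphisms of Lemma~\ref{lemma: disjoint union} and Lemma~\ref{lemma: gluing} do not a priori commute, and the sign by which the square $\det(\T_1 \coprod \T_2) \otimes \det(\T_3 \coprod \T_4) \to \det\bigl((\T_1 \coprod \T_2) \sharp (\T_3 \coprod \T_4)\bigr)$ fails to agree with $\det(\T_1 \sharp \T_3) \otimes \det(\T_2 \sharp \T_4) \to \det\bigl((\T_1 \sharp \T_3) \coprod (\T_2 \sharp \T_4)\bigr)$ is a separate lemma (this is exactly the cross-compatibility you flag at the end, but it is the crux, not an afterthought). A Koszul-style supercommutation count also has $o(\T)$ passing $o(\T^+_{\bs S'^+})$ and $o(\T^-_{\bs S^-})$ passing $o(\T')$, so reducing everything to a single transposition needs justification, not assertion. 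Second, you assume without argument that $\dconst = \gconst = 1$ in the construction reproduces $o_{bm}$; but the determinant-line isomorphism for $\coprod$ in Formula~\eqref{eqn: disjoint union basis expression} already contains a convention-dependent sign, and the paper's footnote explicitly attributes $\gconst_{bm}$ to the fact that this paper's gluing convention (negative end $i$ of $\T$ glues to positive end $i$ of $\T'$) differs from the matching order in \cite{bourgeois2004coherent}; your explanation of $\gconst_{bm}$ as an internal reordering of $\T^-_{\bs S^-}$ is heuristically in the same spirit, but until the convention gap is pinned down you have not shown that your $o$ equals $o_{bm}$ and thus that the signs you compute are the ones stated.
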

The gluing convention in this paper is different from that of \cite{bourgeois2004coherent}\footnote{The matching ordering between the negative punctures of $\dot \Sigma$ and the positive punctures of $\dot \Sigma'$ is reversed in \cite{bourgeois2004coherent}.}, which is source of the extra sign $\gconst_{bm}$.

\begin{theorem}[\cite{hutchings2009gluing}]\label{thm: ht main}
There exists a choice of orientation $o_{ht}$ for all CR tuples such that: \be
    \item The gluing map is orientation-preserving: 
    $$o_{ht}(\T)\sharp o_{ht}(\T') =  o_{ht}( \T \sharp \T'),$$ 
    which in particular means $\gconst_{ht} = 1$. 
    \item The disjoint union map is orientation-preserving with a sign correction:
        $$o_{ht}(\T) \coprod o_{ht}(\T') = \dconst_{ht} \cdot o_{ht}(\T \coprod \T'),$$
        where $$\dconst_{ht} = (-1)^{\op{ind}^- \T \cdot \op{ind} \T'}.$$
    \item When $k_- + k_+ = 0$, $o_{ht}(\T) = \ocan(\T).$ 
    \item An isomorphism between CR tuples preserves orientation.
\ee 
\end{theorem}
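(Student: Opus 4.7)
The plan is to realize the construction of Section~2.4 with specific choices of the signs $\dconst$ and $\gconst$ that produce the Hutchings--Taubes convention, and then verify the four listed properties. Concretely, take $\dconst \equiv 1$ in \eqref{equation: orientation of bs S} and $\gconst \equiv 1$ in \eqref{equation: orientation of T}. For each admissible $S$, choose $o(\T^+_S)$ arbitrarily and define $o(\T^-_S)$ by $o(\T^+_S)\sharp o(\T^-_S)=\ocan(\T^+_S\sharp\T^-_S)$. Set
\[
o(\T^\pm_{\bs S^\pm}) := o(\T^\pm_{S^\pm_1}) \coprod \cdots \coprod o(\T^\pm_{S^\pm_{k^\pm}}),
\]
and define $o_{ht}(\T)$ as the unique orientation satisfying
\[
o(\T^+_{\bs S^+}) \sharp o_{ht}(\T) \sharp o(\T^-_{\bs S^-}) = \ocan(\T^+_{\bs S^+} \sharp \T \sharp \T^-_{\bs S^-}).
\]
Existence and uniqueness of $o_{ht}(\T)$ follow from Lemma~\ref{lemma: gluing}. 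Property~(3) is immediate: when $k_\pm=0$, no cappings appear and the defining equation reads $o_{ht}(\T)=\ocan(\T)$. Property~(4) follows from the contractibility of $\CRO(\T)$ together with the naturality of gluing, disjoint union, and the canonical orientation under CR-tuple isomorphisms.

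The heart of the argument is properties~(1) and~(2), each verified by computing the canonical orientation of an appropriate closed tuple in two ways. For~(1), let $\mathcal{C} := \T^+_{\bs S^+} \sharp \T \sharp \T' \sharp \T^-_{\bs S'^-}$. The first evaluation uses associativity of gluing and the defining relation for $o_{ht}(\T \sharp \T')$ to express $\ocan(\mathcal{C})$ as $o(\T^+_{\bs S^+}) \sharp o_{ht}(\T \sharp \T') \sharp o(\T^-_{\bs S'^-})$. The second uses a \emph{zipper identity}: inserting $\T^-_{\bs S^-} \coprod \T^+_{\bs S'^+}$ into the middle of $\mathcal{C}$ converts it into the disjoint union $[\T^+_{\bs S^+} \sharp \T \sharp \T^-_{\bs S^-}] \coprod [\T^+_{\bs S'^+} \sharp \T' \sharp \T^-_{\bs S'^-}]$, whose canonical orientation equals, via the defining relations for $o_{ht}(\T)$ and $o_{ht}(\T')$ together with the interchange law between $\sharp$ and $\coprod$, a combination involving $o_{ht}(\T) \sharp o_{ht}(\T')$. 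Comparing the two expressions and invoking the invertibility of gluing yields $o_{ht}(\T \sharp \T') = o_{ht}(\T) \sharp o_{ht}(\T')$, that is, $\gconst_{ht}=1$. Because this paper's gluing convention matches the negative ends of $\T$ with the positive ends of $\T'$ in the same order (unlike~\cite{bourgeois2004coherent}), no reordering sign is incurred at this step.

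For~(2), the analogous two-way evaluation is applied to $\ocan(\T^+_{\bs S^+ \bs S'^+} \sharp (\T \coprod \T') \sharp \T^-_{\bs S^- \bs S'^-})$: once via the defining equation for $o_{ht}(\T \coprod \T')$ combined with $\dconst=1$ for the compound cappings, and once by disjoint-unioning the defining relations for $\T$ and $\T'$. Equating the two and applying the interchange law (iterated use of Lemma~\ref{lemma: disjoint union} and the ordering sign in \eqref{eqn: disjoint union basis expression}) produces $o_{ht}(\T) \coprod o_{ht}(\T') = \dconst_{ht} \cdot o_{ht}(\T \coprod \T')$ with $\dconst_{ht}=(-1)^{\op{ind}^-\T \cdot \op{ind}\T'}$, the sign arising as the cumulative Koszul sign from rearranging the six capped-and-original blocks into the two alternative orderings.

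The main technical obstacle throughout is this Koszul-sign bookkeeping in the interchange law between $\sharp$ and $\coprod$; both the commutativity sign of Lemma~\ref{lemma: disjoint union} and the ordering sign in \eqref{eqn: disjoint union basis expression} contribute, and one must verify that all extraneous contributions cancel. The computations are purely mechanical once the definitions are unwound.
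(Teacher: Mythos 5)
First, note that the paper does not prove this statement at all: Theorem~\ref{thm: ht main} is imported from \cite{hutchings2009gluing}, so your proposal has to stand on its own, and as written it has a genuine gap at its central step. The ``zipper identity'' is not a legitimate move: the closed tuple $\mathcal C=\T^+_{\bs S^+}\sharp\T\sharp\T'\sharp\T^-_{\bs S'^-}$ and the re-capped object $\bigl(\T^+_{\bs S^+}\sharp\T\sharp\T^-_{\bs S^-}\bigr)\coprod\bigl(\T^+_{\bs S}\sharp\T'\sharp\T^-_{\bs S'^-}\bigr)$ are \emph{different} CR tuples (different topology, different number of components), so their determinant lines are not identified by any homotopy, and one cannot pass from $\ocan$ of one to $\ocan$ of the other ``via the defining relations.'' What you actually need is the statement that if $v_{\mathcal A}\sharp o(\T^-_{\bs S})=\ocan(\mathcal A\sharp\T^-_{\bs S})$ and $o(\T^+_{\bs S})\sharp v_{\mathcal B}=\ocan(\T^+_{\bs S}\sharp\mathcal B)$ for $\mathcal A=\T^+_{\bs S^+}\sharp\T$ and $\mathcal B=\T'\sharp\T^-_{\bs S'^-}$, then $v_{\mathcal A}\sharp v_{\mathcal B}=\ocan(\mathcal A\sharp\mathcal B)$; i.e.\ that the single normalization $o(\T^+_S)\sharp o(\T^-_S)=\ocan(\T^+_S\sharp\T^-_S)$ propagates when a neck is cut and re-capped. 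That propagation is precisely the analytic content of the Hutchings--Taubes linear gluing results (and of Bourgeois--Mohnke's coherence theorem); it does not follow from Lemma~\ref{lemma: gluing}, Lemma~\ref{lemma: disjoint union}, associativity, and Koszul bookkeeping alone, yet your argument assumes it silently.

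A second, more elementary problem is the unverified assertion that taking $\dconst\equiv 1$ in \eqref{equation: orientation of bs S} and $\gconst\equiv 1$ in \eqref{equation: orientation of T} yields exactly $\dconst_{ht}=(-1)^{\op{ind}^-\T\cdot\op{ind}\T'}$. Applying property (2) to two negative capping tuples $\T=\T^-_{S_1}$, $\T'=\T^-_{S_2}$ gives the sign $(-1)^{|S_1|\,|S_2|}$, which is $-1$ for odd orbits, whereas your choice in step (2) of the construction declares the compound negative capping to be the unsigned disjoint union; reconciling these (the orientation step (3) assigns to $\T^-_{S_1}\coprod\T^-_{S_2}$ versus the one used as a capping) is exactly the consistency check the paper flags when it says $\dconst,\gconst$ are ``determined inductively'' and differ between the two conventions. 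Dismissing this, together with the $\sharp$/$\coprod$ interchange law (which you invoke but never state, and which carries its own Koszul sign coming from \eqref{eqn: disjoint union basis expression}), as ``purely mechanical'' leaves the actual content of the theorem unproved.
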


Let $\T'$ be the same CR tuple as $\T$ except that the $i$-th and the $(i+1)$-th positive (or negative) punctures are swapped. Note that $\det \T$ does not depend on the ordering of the punctures of $\T$, so $\det \T = \det \T'$. On the other hand, the coherent orientation depends on the ordering of the punctures. Indeed, 
\begin{corollary} \label{cor: switching ends}
Let $o = o_{bm}$ or $o_{ht}$. Then $$o(\T) = (-1)^{|S^\pm_i|\cdot |S^\pm_{i+1}|} o(\T').$$
\end{corollary}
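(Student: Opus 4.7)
The plan is to follow the three-step construction of $o(\T)$ (Formulas~\eqref{equation: orientation of bs S} and~\eqref{equation: orientation of T}) and observe that swapping the $i$-th and $(i+1)$-th punctures of $\T$ leaves every ingredient invariant except for the order of two adjacent factors in the disjoint union that assembles the corresponding capping orientation. I will carry out the argument for positive punctures; the negative case is analogous after interchanging the roles of positive and negative caps.

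First I would note that if $\T'$ denotes $\T$ with positive punctures $i, i+1$ exchanged, then $(\T')^-_{\bs S'^-} = \T^-_{\bs S^-}$, while $(\T')^+_{\bs S'^+}$ is obtained from $\T^+_{\bs S^+}$ by swapping its $i$-th and $(i+1)$-th components in the iterated disjoint union. Applying Lemma~\ref{lemma: disjoint union} to this single adjacent transposition, together with the identity $\op{ind}\T^+_S = |S|$, produces the factor $(-1)^{|S_i^+| \cdot |S_{i+1}^+|}$ relating the two iterated disjoint unions of capping orientations. I would then check that the remaining data are invariant under the swap: the constants $\dconst$ for positive caps are all trivial, because $\op{ind}^- \T^+_S = 0$ forces $\dconst_{bm} = \dconst_{ht} = 1$ at every pairwise step; the sign $\gconst$ is also invariant, since $\gconst_{ht} \equiv 1$ by Theorem~\ref{thm: ht main} and $\gconst_{bm}$ depends on the negative ends of the first factor in each pairwise gluing only through a symmetric sum of the form $\sum_{a<b} |S_a^-||S_b^-|$, which does not involve the positive punctures being swapped; and finally the canonical orientation $\ocan(\T^+_{\bs S^+} \sharp \T \sharp \T^-_{\bs S^-})$ agrees with $\ocan((\T')^+_{\bs S'^+} \sharp \T' \sharp (\T')^-_{\bs S'^-})$, because the two closed glued CR tuples are canonically isomorphic.

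Substituting these observations into~\eqref{equation: orientation of T} and cancelling the common factors $o(\T^+_{\bs S^+}) \sharp (\cdot) \sharp o(\T^-_{\bs S^-})$ by bilinearity of $\sharp$ yields the desired identity $o(\T) = (-1)^{|S_i^+||S_{i+1}^+|}\, o(\T')$. The negative-puncture case is handled symmetrically; the main bookkeeping obstacle in that case is the iterated $\dconst_{ht}$, whose pairwise factors $(-1)^{|S_a^-||S_b^-|}$ are not individually trivial but combine into the symmetric total sign $(-1)^{\sum_{a<b}|S_a^-||S_b^-|}$, which is manifestly invariant under an adjacent transposition of negative ends, so the argument closes without any new sign.
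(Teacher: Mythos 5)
Your argument is correct and follows essentially the paper's own route: both proofs derive the sign by capping the swapped ends, observing that the swapped and unswapped cappings yield literally the same closed CR tuple, and extracting $(-1)^{|S_i^\pm|\cdot|S_{i+1}^\pm|}$ from Lemma~\ref{lemma: disjoint union}; you merely carry out in general the bookkeeping (invariance of $\dconst$, $\gconst$, and $\ocan$ under the transposition) that the paper records only for the model case of two positive ends and $o_{ht}$. One caveat: your claim that the pairwise $\dconst$'s for the positive caps are trivial because $\op{ind}^-\T^+_S=0$ follows the paper's literal description of $\T^+_S$, whereas the paper's own proof exhibits the nontrivial factor $(-1)^{|S_1^+|\cdot|S_2^+|}$ in front of $o(\T^+_{S_1^+})\coprod o(\T^+_{S_2^+})$, reflecting that the tuples capping positive ends must carry negative punctures for the complete gluing in \eqref{equation: orientation of T} to make sense; this discrepancy is harmless for your proof, since all you use is that the total $\dconst$ is unchanged by the adjacent transposition, which holds in either convention by exactly the symmetric-sum observation you already make for the negative-puncture case.
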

\begin{proof}
 
The case when $o = o_{bm}$ can be found in \cite{bourgeois2004coherent}, and we do not repeat it here.
We provide a proof for the case when $ o = o_{ht}$ and when $\T$ has two positive punctures and zero negative punctures. The general case can be proved similarly, and we skip it here. 

Let $\T_{S_i^+}^+$ for $i = 1,2$ be the positive capping CR tuple.
Then the two gluings $(\T_{S_1^+}^+ \coprod \T_{S_2^+}^+) \sharp \T $ and $(\T_{S_2^+}^+ \coprod \T_{S_1^+}^+)\sharp \T'$ give the same CR tuple $\T''$ with no punctures. 
Hence, we have 
\begin{align*}
& (-1)^{|S_1^+|\cdot |S_2^+|} \left(  o(\T_{S_1^+}^+) \coprod o(\T_{S_2^+}^+) \right) \sharp o(\T) \\
= & o(\T'') = (-1)^{|S_1^+| \cdot  |S_2^+|} \left(  o(\T_{S_2^+}^+) \coprod o(\T_{S_1^+}^+) \right) \sharp o(\T').
\end{align*}
By Lemma~\ref{lemma: disjoint union}, we have 
$$o(\T_{S_1^+}^+) \coprod o(\T_{S_2^+}^+) = (-1)^{|S^+_1|\cdot |S^+_{2}|} o(\T_{S_2^+}^+) \coprod o(\T_{S_1^+}^+),$$
which implies the statement.

\end{proof}

\section{Coherent orientation of the moduli spaces}

Let $(M, \xi)$ be a contact manifold of dimension $2n-1$ and $\alpha$ be a contact $1$-form such that $\xi = \ker \alpha$. 
Denote by $R_\alpha$ the Reeb vector field of $\alpha$.
We define a Reeb orbit is {\em non-degenerate} if the Poincar\'e return map restricted to $\xi$ along the Reeb orbit does not have $1$ as an eigenvalue.
We make the assumption that all Reeb orbits are non-degenerate.  

For each simple (i.e., not multiply covered) Reeb orbit $\gamma$, we choose a trivialization $\tau$ of the symplectic vector bundle $(\xi, d\alpha |_\xi)$ restricted to $\gamma$. Then the linearized flow of $R_\alpha$ along $\gamma$ gives a path of symplectic matrices, and its Maslov index is called the {\em Conley-Zenhnder index} of $\gamma$, denoted by $\mu_{\op{CZ},\tau} (\gamma)$. We assign to $\gamma$ the $\Z_2$-grading $|\gamma| = \mu_{\op{CZ},\tau} (\gamma) + n-1 \mod 2$, which is independent of the choice of $\tau$.

On each simple Reeb orbit $\gamma$, we choose a fixed point $x_\gamma$, which we call an {\em asymptotic marker}. 

Consider an ($\R$-invariant) $\alpha$-tame almost complex structure $J$ on $W := \R \times M$, where the definition of $\alpha$-tame can be found in \cite{bao2018definition} (Definition 3.1.1).
We recall the definition of moduli spaces of $J$-holomorphic curves.
\begin{definition}[moduli space of $J$-holomorphic curves] \label{def: moduli spaces}
Consider integers $k_+ \geq 1$ and $k_- \geq 0$, 
let $\bs\gamma_\pm = (\gamma_{\pm,1}, \dots, \gamma_{\pm,{k_\pm}})$ denote an ordered tuple of Reeb orbits.
For any $g\in \Z^{\geq 0}$, $A \in H_2(M;\Z)$, we define the moduli space of $J$-holomorphic curves  $\parametrizedModulispace{g}{\gamma_+}{\gamma_-}{A}$  consisting of equivalence classes $[\Sigma, j, \bs p, \bs r, u]$ of tuples satisfying the following conditions: 
\be 
    \item $(\Sigma, j, \bs p, \bs r)$ is a {\em connected}, decorated Riemann surface of genus $g$ with $k_+$ positive and $k_-$ negative marked points. 
\ee 
Let $\phi_i^\pm: D\subset \C \to \mathcal U_{p_i^\pm}\subset \Sigma$ be a biholomorphic map from the unit disc $D$ to a neighborhood $\mathcal U_{p_i^\pm}$ of $p^\pm_i$. We require $\phi_i^\pm(o) = p_i^\pm$ and $d\phi_i^\pm(\frac{\partial}{\partial x}) = r^\pm_i,$ where $\frac{\partial}{\partial x} \in T_o D.$ 
Additionally, let $h_i^\pm:\R^{\geq 0 (\leq 0)} \times S^1 \to \dot{\mathcal U}_{p_i^\pm}$ be the biholomorphic map defined by $h_i^\pm (s,t) =(\phi^\pm_i)^{-1}(e^{\mp s \mp \sqrt{-1} t})$.

\be 
    \item[(2)] $u:\Sigma - \bs p \to W$ is a proper map satisfying: 
        \be
            \item $\overline \partial_J u = \frac{1}{2}(du + J \circ du \circ j) = 0$.
            \item $\lim_{s \to \pm \infty} u\circ (\phi^\pm_i)^{-1}(e^{\mp s \mp \sqrt{-1} t}) = \gamma_{\pm,i} ( T  t),$ where $T > 0$ is the period of $\gamma_{\pm,i}$  and $\gamma_{\pm,i}$ is parametrized such that $\gamma_{\pm,i} (0) = r_i^\pm$.

        \ee
    \item[(3)] The homology class obtained by ``capping off" the punctures of $u$ is $A$. Refer to Section~9.1.2 in \cite{bao2015semi} or \cite{bourgeois2004coherent} for the ``capping off" construction.
    
    \item[(4)] $(\Sigma, j, \bs p, \bs r, u)$ is equivalent to $(\Sigma', j', \bs p', \bs r', u')$ if there exists a biholomorphic map $f: (\Sigma, j) \to  (\Sigma', j')$ such that $f$ also maps $(\bs p, \bs r)$ to $(\bs p', \bs r')$ and $u' =  u \circ f.$ 
\ee  
We define the quotient space as  $$\unparametrizedModulispace{g}{\gamma_+}{\gamma_-}{A} = \parametrizedModulispace{g}{\gamma_+}{\gamma_-}{A}/\R,$$
where $\R$ acts by composing the map $u$ with the translation in the $\R$-direction in $W$.

\end{definition}

It is convenient to have the following lemma.
\begin{lemma}\label{lemma: stabilize in the domain}
Let $U$ and $W$ be Banach spaces, and let $\phi: U\to W$ be a linear Fredholm operator. Let $V$ be a finite-dimensional vector space and $\psi: V\to W$ be a linear map. If the map $\phi \oplus \psi: U \oplus V \to W$ defined as $\phi\oplus \psi (u, v) = \phi(u) + \psi(v)$ is surjective,
then there exists an isomorphism 
$$\det \phi \simeq \det (\phi \oplus \psi) \otimes \wedge^{\op{top}} V^*,$$
which is natural up to a positive constant.
\end{lemma}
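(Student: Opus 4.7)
The plan is to reduce the statement to the standard determinant identity associated to a four-term exact sequence. Specifically, I would produce the exact sequence
\[
0 \to \ker\phi \to \ker(\phi\oplus\psi) \xrightarrow{\pi_V} V \xrightarrow{\bar\psi} \op{coker}\phi \to 0,
\]
where $\ker\phi \hookrightarrow \ker(\phi\oplus\psi)$ is $u \mapsto (u,0)$, the map $\pi_V$ is projection onto the $V$-factor, and $\bar\psi(v) = [\psi(v)]$. Exactness is routine: $(u,v)\in\ker\pi_V$ forces $v=0$ and $u\in\ker\phi$; $\bar\psi\circ\pi_V=0$ because $\psi(v)=-\phi(u)\in\op{im}\phi$ on $\ker(\phi\oplus\psi)$; $\ker\bar\psi=\psi^{-1}(\op{im}\phi)$ is the image of $\pi_V$; and surjectivity of $\bar\psi$ is exactly the hypothesis $W=\op{im}\phi+\op{im}\psi$. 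Since $\phi$ is Fredholm and $V$ is finite-dimensional, $\phi\oplus\psi$ is Fredholm as a finite-rank perturbation of $\phi\oplus 0$, so in particular $\ker(\phi\oplus\psi)$ (and hence all four terms) is finite-dimensional.

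From here I would apply the standard fact that any four-term exact sequence $0\to A\to B\to C\to D\to 0$ of finite-dimensional vector spaces yields a canonical (up to a positive scalar) isomorphism $\wedge^{\op{top}}A\otimes\wedge^{\op{top}}C\simeq\wedge^{\op{top}}B\otimes\wedge^{\op{top}}D$, obtained by splitting at $\op{im}(B\to C)$ into two short exact sequences and composing the induced determinant isomorphisms. Applied to the sequence above and rearranged by tensoring with $(\wedge^{\op{top}}V)^*\otimes(\wedge^{\op{top}}\op{coker}\phi)^*$, this gives
\[
\wedge^{\op{top}}\ker\phi\otimes\wedge^{\op{top}}(\op{coker}\phi)^*\;\simeq\;\wedge^{\op{top}}\ker(\phi\oplus\psi)\otimes\wedge^{\op{top}}V^*,
\]
which, using $\op{coker}(\phi\oplus\psi)=0$ on the right, is exactly $\det\phi\simeq\det(\phi\oplus\psi)\otimes\wedge^{\op{top}}V^*$.

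An alternative route, closer to the style of the paper, is to invoke Lemma~\ref{lemma: linear algebra} directly on $\phi$ with $F=\op{im}\psi$ (finite-dimensional, and satisfying $W=\op{im}\phi+F$ by hypothesis). The lemma then yields $\det\phi\simeq\wedge^{\op{top}}\phi^{-1}(\op{im}\psi)\otimes\wedge^{\op{top}}(\op{im}\psi)^*$, and the identification with $\det(\phi\oplus\psi)\otimes\wedge^{\op{top}}V^*$ follows by cancelling the common $\wedge^{\op{top}}\ker\psi$ factor arising from the two short exact sequences $0\to\ker\psi\to V\to\op{im}\psi\to 0$ and $0\to\ker\psi\to\ker(\phi\oplus\psi)\to\phi^{-1}(\op{im}\psi)\to 0$. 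The only mildly delicate point in either approach is verifying naturality ``up to a positive constant'', i.e.\ that no auxiliary choice (of basis, of complement, of $F$) introduces an operator-dependent sign. This is automatic, because every constituent isomorphism is of the canonical short-exact-sequence type already used in the proof of Lemma~\ref{lemma: linear algebra} and the bundle construction that follows it; the same construction also delivers continuity of the isomorphism under homotopies of $\phi$ and $\psi$.
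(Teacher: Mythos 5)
Your argument is correct, and it takes a genuinely different route from the paper. The paper's proof picks explicit bases adapted to the intersection $I = \phi(U)\cap\psi(V)$ (decomposing $\phi^{-1}(I) = H\oplus\ker\phi$, $V = G\oplus\psi^{-1}(I)$, $\psi^{-1}(I)=F'\oplus\ker\psi$) and exhibits the isomorphism as a concrete wedge formula, citing Exercise~A.23 of \cite{mcduff2012j} for the injective-$\psi$ case. You instead write down the four-term exact sequence
$$0 \to \ker\phi \to \ker(\phi\oplus\psi) \xrightarrow{\pi_V} V \xrightarrow{\bar\psi} \op{coker}\phi \to 0$$
and invoke the standard determinant identity for exact sequences; your exactness checks are correct, and the tensor rearrangement using $\op{coker}(\phi\oplus\psi)=0$ is valid. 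Your approach is more conceptual and handles the general case uniformly without an injective reduction, whereas the paper's approach has the advantage of producing an explicit basis formula, which is convenient when one later needs to trace the isomorphism against a particular choice of basis. Your second alternative, applying Lemma~\ref{lemma: linear algebra} with $F=\op{im}\psi$, is closer in spirit to what the paper actually does, since the paper's subspace $I$ is precisely $\phi(U)\cap F$. The one step that merits another sentence is your claim that naturality ``up to a positive constant'' is automatic: it is true, but the reason is that the determinant isomorphism attached to a short exact sequence depends on a choice of splitting, and any two splittings differ by a block-unipotent change of basis with determinant $+1$, so the induced orientation does not depend on the choice; making this explicit would close the small gap you flag.
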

\begin{proof} See Exercise A.23 in \cite{mcduff2012j} for the case when $\psi$ is  injective.
Let $I = \phi(U) \cap \psi(V) \subset W$.
Suppose $\phi^{-1}(I) = H \oplus \ker \phi$ and 
let $\{u_1, \dots, u_k\}$ be a basis of $H$,
and $\{u_{k+1}, \dots, u_{k+\ell}\}$ be a basis of $\ker \phi$.
Then $\{\phi(u_1),\dots, \phi(u_k)\}$ forms a basis of $I$.
Suppose $V = G \oplus \psi^{-1}(I)$ and $\psi^{-1}(I) = F \oplus \ker \psi$.
Let $$\{v_1, \dots v_m, v_{m+1}, \dots , v_{m+k}, v_{m + k +1}, \dots, v_{m+k+n}\}$$ be a basis of $V$ such that
\be
    \item $\{v_1, \dots, v_m\}$ is a basis of $G$,
    \item $\{v_{m+1},\dots, v_{m +k}\}$ is a basis of $F$ and $\psi(v_{m+i}) = \phi(u_{i})$ for $i = 1, \dots, k$, and
    \item $\{v_{m+k+1}, \dots, v_{m+k+n}\}$ is a basis of $\ker \psi$. 
\ee 
The isomorphism is given by: 
\begin{align*}
    & u_{k+1} \wedge \dots \wedge u_{k + \ell} \otimes \psi(v_m)^* \wedge \dots \wedge \psi(v_1)^*  \\
    \mapsto & (u_{k+1}, 0) \wedge \dots \wedge (u_{k+\ell}, 0) \wedge (u_1, -v_{m+1}) \wedge \dots \wedge (u_k, -v_{m+k}) \\
    & \wedge (0, v_{m+k+1}) \wedge \dots \wedge (0, v_{m+k+n}) \otimes v_{m+k+n}^* \wedge \dots \wedge v_1^*.
\end{align*}
\end{proof}

For any $$[\Sigma, j, \bs p, \bs r, u] \in \unparametrizedModulispace{g}{\gamma_+}{\gamma_-}{A},$$
consider the complex vector bundle $E:= u^*TW$ over $\dot \Sigma$ where the complex structure is given by $u^*J.$
Around each Reeb orbit $\gamma_{\pm,i} \in \bs \gamma_\pm$, a trivialization $\tau$ of $(\xi, d\alpha|_\xi, J)$ restricted to $\gamma_{\pm,i}$ (induced from the trivialization over the underlying simple Reeb orbit) extends to a trivialization of $(E, J)|_{\dot{\mathcal U}_{p_i^\pm}}$ as in Formula~\eqref{formula: trivialization of E}. This gives a CR tuple denoted by $\T_u$. A different representative of $[\Sigma, j, \bs p, \bs r, u]$ gives an isomorphic CR tuple.
Let $$D_u: C^\infty  (\dot \Sigma,  E)\to C^\infty (\dot \Sigma, \wedge^{0,1}\dot \Sigma \otimes_\C E)$$ be the linearized $\overline \partial_J$ operator. It extends  to a Fredholm operator as in Formula~\eqref{formula: Fredholm operator}. 
Hence, we get a CR operator denoted by $D_u \in \mathfrak D(\T_u)$.

Fix any $[u] \in \parametrizedModulispace{g}{\gamma_+}{\gamma_-}{A}$. If $6g-6 + 2(k_+ + k_-) > 0$, we have the full linearized $\overline \partial_J$ operator at $[u]$
\[
\mathcal D: \textit{Teich} \oplus  C^\infty  (\dot \Sigma,  E)\to  C^\infty (\dot \Sigma, \wedge^{0,1}\dot \Sigma \otimes_\C E),
\]
where $\textit{Teich}$ is the tangent space of complex structures of $(\Sigma, \bs p)$ at $j$, 
which has dimension equal to  $6g-6 + 2(k_+ + k_-)$ and $\mathcal D|_{ C^\infty  (\dot \Sigma,  E)} = D_u$.
Suppose that $[u]$ is transversely cut out, i.e., $\op{coker} \mathcal D = \{0\}$.
Then 
$$\wedge^{\op{top}}\left(T_{[u]}\parametrizedModulispace{g}{\gamma_+}{\gamma_-}{A}\right) \simeq \wedge^{\op{top}} \ker \mathcal D \simeq \det D_u \otimes \wedge^{\op{top}} \textit{Teich}^*,$$ 
where the last isomorphism is given by Lemma~\ref{lemma: stabilize in the domain}. 

If $6g-6 + 2(k_+ + k_-) < 0$ and $[u]$ is transversely cut out, i.e., $\op{coker}D_u = \{0\}$, then: 
$$\ker D_u  =  T_{[u]}\parametrizedModulispace{g}{\gamma_+}{\gamma_-}{A}\oplus \textit{Aut},$$ 
where 
$\textit{Aut}$ is the group of the biholomorphism of $(\dot\Sigma, j)$ with dimension $-(6g-6 + 2 (k_+ + k_-))$.

In either case, the virtual dimension of the moduli space is given by:
\begin{align*} 
&  \op{virdim} \parametrizedModulispace{g}{\gamma_+}{\gamma_-}{A} \\
= & \dim_\R \ker D_u + 6g-6 + 2(k_+ + k_-) \\
=  & \sum_{i=1}^{k_+} \mu_{\op{CZ},\tau}(\gamma_{+,i}) - \sum_{i=1}^{k_-} \mu_{\op{CZ},\tau}(\gamma_{-,i})  + 2c_1(E;\tau) + (n-3) (2-2g - k_- - k_+),
\end{align*}
where $c_1(E;\tau)$ is the relative first Chern number. 
Moreover, we have the canonical isomorphism: 
\begin{equation}\label{formula: isomorphism from kerDu to tangent space}
\det \T_u \simeq \wedge^{\op{top}} (T_{[u]}\parametrizedModulispace{g}{\gamma_+}{\gamma_-}{A})
\end{equation}
as the portions $\textit{Teich}$ and $\textit{Aut}$ are complex and hence canonically oriented.
Let $o = o_{ht}$ or $o_{bm}$.
Note that:
\begin{equation}
    T_{[u]}\parametrizedModulispace{g}{\gamma_+}{\gamma_-}{A} \simeq \R \langle \partial_s ([u]) \rangle \oplus T_{[u]} \unparametrizedModulispace{g}{\gamma_+}{\gamma_-}{A},
\end{equation} 
  where $\partial_s$ is the vector field on $\parametrizedModulispace{g}{\gamma_+}{\gamma_-}{A}$ that is generated by the $\R$-translation, and $\partial_s ([u])$ is $\partial_s$ evaluated at $[u]$.  We define the orientation $\tilde o(u)$ of $T_{[ u ]} \unparametrizedModulispace{g}{\gamma_+}{\gamma_-}{A}$ by the equation:
\begin{equation} \label{formula: definition of orientation of moduli space}
o(\T_u) \simeq \partial_s([u]) \wedge \tilde o(u),
\end{equation}
where the isomorphism is given by Formula~\eqref{formula: isomorphism from kerDu to tangent space}. 
In the case when $$\op{virdim} \parametrizedModulispace{g}{\gamma_+}{\gamma_-}{A} = 1,$$ $\tilde o(u) \in \{\pm 1\}$.

For any $[ u ] \in \widetilde{\mathcal M}_1 = \parametrizedModulispace{g}{\gamma_+}{\gamma_-}{A}$ and $[ v ] \in \widetilde{\mathcal M}_2 = \parametrizedModulispace{g'}{\gamma_+'}{\gamma_-'}{A'}$ that are transversely cut out and the first $\tau$ Reeb orbits of $\bs \gamma_-$ match the last $\tau$ Reeb orbits in $\bs\gamma_+'$,
we can glue $u$ and $v$ with some fixed large gluing parameter along the $\tau$ punctures to $[ w ] \in \widetilde{\mathcal M}_3 = \parametrizedModulispace{g''}{\gamma_+''}{\gamma_-''}{A''}$,
where $g'' = g + g' + (\tau - 1)$, $$\bs \gamma_+'' = (\gamma_{+,1}',\dots, \gamma'_{+,k'_+-r}, \gamma_{+,1}, \dots, \gamma_{+,k_+}),$$
$$\bs \gamma_-'' = (\gamma_{-,1}', \dots, \gamma_{-,k_-}',\gamma_{-,k_--\tau + 1}, \dots, \gamma_{-, k_-}),$$
and $A'' = A + A'$. 
The construction of the gluing map on the moduli spaces is standard (see for example Section 10 in \cite{mcduff2012j} or Section 6 in \cite{bao2015semi}). The gluing of the CR tuples in Section~\ref{section: gluing} is the linearized version of this, and indeed, we have the commutative diagram:
\begin{equation} \label{formula: commutative diagram}
\begin{tikzcd}
\det \T_u \otimes \det \T_v   \arrow[r,"\sharp_\tau"] \arrow[d, "\simeq"] & \det \T_w \arrow[d, "\simeq"]\\
 \wedge^{\op{top}} ( T_{[u]}\widetilde{\mathcal M}_1) \otimes \wedge^{\op{top}} (T_{[v]} \widetilde{\mathcal M}_2)    \arrow[r] &  \wedge^{\op{top}}(T_{[w]}\widetilde{\mathcal M}_3),
\end{tikzcd}
\end{equation}
where the lower horizontal map is induced by gluing.
 Let $\mathcal M_i = \widetilde{\mathcal M}_i/\R$ for $i = 1,2,3$.  
The curve $[w]$ is transversely cut out and can be viewed as in a codimension one boundary component of  (a retract of) $\mathcal M_3$. Thus, we have the boundary orientation $\tilde o^b(w) \in \{\pm 1\}$ defined by: 
\begin{equation} \label{formula: boundary orientation}
     \tilde o(w) = n \wedge \tilde o^b (w),
\end{equation}
where $n$ is the outward-pointing normal (pointing in the gluing parameter increasing direction). 

Denote $o(\T_u) \simeq \partial_s([u]) \wedge \tilde o(u)$, $o(\T_v) \simeq \partial_s([v]) \wedge \tilde o(v)$, and $o(\T_{w}) \simeq \partial_s ([w]) \wedge \tilde o(w)$.  Then we have 
$$o(\T_u) \sharp_\tau o(\T_v) = \gconst \cdot \dconst \cdot o(\T_u \sharp_\tau \T_v) = \gconst \cdot \dconst \cdot o(\T_{w }),$$
where the formula for partial gluing follows from taking disjoint unions with trivial tuples before gluing, and more precisely
\be
    \item if $o = o_{ht}$, then 
    \be 
        \item $\gconst = 1$, and
        \item $\dconst = (-1)^{(\sum_{i=1}^{k_--\tau} |S'_{+,i}|) \cdot \op{ind}\T_u}$;
    \ee 
    \item if $o = o_{bm}$, then 
    \be 
        \item $\gconst = \epsilon_{bm}^\sharp$, and
        \item $\dconst = (-1)^C$ with 
        \begin{align*}
        C = & \sum_{1\leq i < j \leq \tau} |S'_{+,i}|\cdot |S'_{+,j}| + (\sum_{i=1}^\tau |S'_{+,i}|)(\sum_{i=1}^{k_+}|S_{+,i}|) \\
         & + \sum_{\tau + 1 \leq i < j \leq k_-} |S_{-,i}|\cdot |S_{-,j}| + (\sum_{i=1}^{k'_-} |S'_{-,i}| )(\sum_{i=\tau+1}^{k_-}|S_{-,i}|).
        \end{align*}
    \ee 
\ee
Tracking the images of $o(\T_u)\otimes o(\T_v) \in \det \T_u \otimes \det \T_v$ in the commutative diagram~\eqref{formula: commutative diagram}, we have 
\begin{align*}
\gconst \cdot \dconst \cdot \partial_s([w]) \wedge n \wedge \tilde o^b (w)& = \widehat{\partial_s([u])} \wedge \widehat{\tilde o(u)} \wedge \widehat{\partial_s([v])} \wedge \widehat{\tilde o(v)} \\
& =  (-1)^{\op{ind}\T_u-1}  \widehat{\partial_s([u])} \wedge \widehat{\partial_s([v])} \wedge \widehat{\tilde o(u)} \wedge  \widehat{\tilde o(v)} \\
& = (-1)^{\op{ind}\T_u} \partial_s([w]) \wedge n \wedge \widehat{\tilde o(u)} \wedge  \widehat{\tilde o(v)},
\end{align*}
where: 
\be 
    \item $\widehat{\partial_s([u])}\in \ker D_w$ is defined as follows (see, for example, Section 9.12 in \cite{bao2018definition} for details in the case when the domains are cylinders): 
    \be 
        \item Translate $\eta = \partial_s([u])$ in the $\R$-direction by the gluing parameter $R$, yielding $\eta_R$.
        \item Choose a cutoff function $0\leq \beta \leq 1$ that is equal to $0$ in a small neighborhood of punctures and is equal to $1$ outside a small neighborhood of punctures.
        \item Multiply $\eta_R$ with $\beta$ to damp it out near punctures. 
        \item View $\beta \eta_R$ as an element in the domain of $D_w$, the linearized $\overline \partial$ operator at $[w]$.
        \item Project $\beta \eta_R$ to $\ker D_w$ with respect to the $L^2$ inner product. 
    \ee 
    \item $\widehat{\partial_s([v])}$ is defined similarly, except that in step (1), it is translated by $-R$.
    \item Supposing $\tilde o(u) = v_1 \wedge \dots \wedge v_k$ with $k = \op{ind}\T_u - 1 \geq 1$, $v_i \in T_{[u]} \mathcal M_1$ for $i = 1, \dots, k$, the term $\widehat{\tilde o(u)}$ is defined as $\widehat v_1 \wedge \dots \wedge \widehat v_k$, where $\widehat v_i$ is defined in the same way as in (1). If $\op{ind} \T_u = 1$, we define $\widehat{\tilde o(u)} = {\tilde o(u)}\in \{\pm 1\}$.
    \item  The last equality follows from the fact that, up to multiplication by a positive number, $\partial_s([w])$ is approximately $\widehat{\partial_s([u])} + \widehat{\partial_s([v])}$ and $n$ is approximately $\widehat{\partial_s([u])} - \widehat{\partial_s([v])}$, and hence $\partial_s([w]) \wedge n$ is approximately $- \widehat{\partial_s([u])} \wedge \widehat{\partial_s([v])}$.
\ee 
In summary, we have the following lemma.
\begin{lemma} \label{lemma: boundary orientation}
The gluing map $\mathcal M_1 \times \mathcal M_2 \to \partial \mathcal M_3$
changes the orientation by the sign $\gconst \dconst \cdot (-1)^{\op{virdim}\mathcal M_1 + 1} = \gconst \dconst \cdot (-1)^{\op{ind} \T_u}.$
\end{lemma}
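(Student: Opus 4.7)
The plan is to chase the image of $o(\T_u)\otimes o(\T_v)$ around the commutative diagram~\eqref{formula: commutative diagram} along its two routes and compare. Going top-then-right, the gluing formula from Theorem~\ref{thm: ht main} (extended to partial gluing along only $\tau$ of the ends via the trivial-tuple trick of Figure~\ref{fig: adding trivial cylinders}) yields $\gconst\dconst\cdot o(\T_w)$, where $\gconst$ and $\dconst$ are the signs recorded just above the lemma; pushing down via~\eqref{formula: isomorphism from kerDu to tangent space} and substituting~\eqref{formula: boundary orientation} produces $\gconst\dconst\cdot\partial_s([w])\wedge n\wedge\tilde o^b(w)$. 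Going left-then-right, I substitute $o(\T_u)=\partial_s([u])\wedge\tilde o(u)$ and $o(\T_v)=\partial_s([v])\wedge\tilde o(v)$ from~\eqref{formula: definition of orientation of moduli space} and then push across the bottom horizontal map, which on a pre-gluing model is given by the hatting procedure (translate by $\pm R$, cut off near the punctures, project to $\ker D_w$) recalled in the paragraph just above the lemma. This side equals, up to a positive scalar, $\widehat{\partial_s([u])}\wedge\widehat{\tilde o(u)}\wedge\widehat{\partial_s([v])}\wedge\widehat{\tilde o(v)}$.

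Equating the two expressions reduces the lemma to a sign bookkeeping in $\wedge^{\op{top}} T_{[w]}\widetilde{\mathcal M}_3$. There are two contributions. First, I commute $\widehat{\partial_s([v])}$ to the left past $\widehat{\tilde o(u)}$, which has $\op{virdim}\mathcal M_1=\op{ind}\T_u-1$ wedge factors, picking up $(-1)^{\op{ind}\T_u-1}$. Second, I identify $\partial_s([w])\wedge n$ with a positive multiple of $-\widehat{\partial_s([u])}\wedge\widehat{\partial_s([v])}$, which contributes an additional $-1$. Combined with the $\gconst\dconst$ factor from the top route, the total sign is $\gconst\dconst\cdot(-1)^{\op{ind}\T_u}$, which is the same as $\gconst\dconst\cdot(-1)^{\op{virdim}\mathcal M_1+1}$, as claimed.

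The main obstacle is the second sign contribution: justifying that for sufficiently large gluing parameter $R$, the infinitesimal $\R$-translation $\partial_s([w])$ on the glued curve is, to leading order, $\widehat{\partial_s([u])}+\widehat{\partial_s([v])}$, while the outward-pointing normal to the boundary stratum (pointing in the direction of increasing $R$) is, to leading order, $\widehat{\partial_s([u])}-\widehat{\partial_s([v])}$. This is the only genuinely analytic input, as opposed to combinatorial linear algebra; I would handle it by the standard pre-gluing Newton iteration laid out in Section~9.12 of \cite{bao2018definition} for the cylindrical case and observe that the argument extends verbatim to higher genus and multiple punctures because both sections of $\ker D_w$ in question are concentrated on the $\tau$ necks being glued and vanish to exponential order elsewhere.
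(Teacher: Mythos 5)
Your proposal is correct and follows essentially the same route as the paper: the paper proves the lemma by exactly this chase of $o(\T_u)\otimes o(\T_v)$ through the commutative diagram~\eqref{formula: commutative diagram}, picking up $(-1)^{\op{ind}\T_u-1}$ from commuting $\widehat{\partial_s([v])}$ past $\widehat{\tilde o(u)}$ and an extra $-1$ from the identification $\partial_s([w])\wedge n \approx -\,\widehat{\partial_s([u])}\wedge\widehat{\partial_s([v])}$, with the same analytic justification (translate, cut off, project to $\ker D_w$, as in Section~9.12 of \cite{bao2018definition}) that $\partial_s([w])$ and $n$ are approximately $\widehat{\partial_s([u])}+\widehat{\partial_s([v])}$ and $\widehat{\partial_s([u])}-\widehat{\partial_s([v])}$, respectively.
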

In particular, if we use the orientation $o_{ht}$, in the proof of $\Ha \cdot \Ha = 0$ (See Section~\ref{section: sft}), where $\op{virdim} \mathcal M_1= 0$, we have the gluing map reverses the boundary orientation.

\section{Signs in symplectic field theory} \label{section: sft}



For each Reeb orbit $\gamma$, we assign two formal variables $p_\gamma$ and $q_\gamma$, graded over $\Z_2$ by $|\gamma|$.
For any $A\in H_2(M;\Z)$, we represent it multiplicatively as $e^A$ and grade it by $0$.
Let $\hbar$ be a formal variable graded by $0$ to keep track of the genus $g$.
Consider  the Weyl super-algebra  $$\mathfrak W = \Q[\{q_\gamma\}_\gamma, \hbar,  \{e^A\}_{A \in H_2(M;\Z)}]\llbracket  \{p_\gamma\}_\gamma \rrbracket,$$ which is the space of all formal power series in $\{p_\gamma\}_\gamma$ over the polynomial ring $$\Q[\{q_\gamma\}_\gamma,  \{e^A\}_{A \in H_2(M;\Z)}, \hbar].$$
We require that all formal variables are graded commutative except 
\begin{equation} \label{formula: graded commutative} 
[p_\gamma, q_{\gamma} ] = p_\gamma q_{\gamma} - (-1)^{|\gamma|} q_\gamma p_\gamma = \frac{\hbar}{m(\gamma)}, 
\end{equation}
where $m(\gamma)$ is the multiplicity of $\gamma$ over the underlying simple Reeb orbit.
In \cite{eliashberg2000introduction},
a potential function $\Ha$ is constructed by counting $J$-holomorphic curves. 
We recall and modify the definition as follows: 
\begin{equation} \label{formula: H_g}
\Ha = \sum_{g \geq 0} \sum_{[\bs \gamma_+], [\bs \gamma_-]} \sum_{A \in H_2(M)}|\unparametrizedModulispace{g}{\gamma_+}{\gamma_-}{A} |  \ q_{\bs \gamma_-}  p_{\bs\gamma_+^\dagger}   e^A \hbar^{g-1} ,
\end{equation}
where 
\be 
\item the second summation is over pairs of unordered tuples  $[\bs\gamma_\pm]$ of Reeb orbits,
\item $\bs\gamma_\pm = (\gamma_{\pm, i_1}, \dots , \gamma_{\pm, i_{k_\pm}})$ is an ordered tuple of Reeb orbits representing the equivalence class $[\bs\gamma_\pm]$,
\item $\bs \gamma_+^\dagger = (\gamma_{+,i_{k_+}}, \dots, \gamma_{+, i_1})$ is the ordered tuple obtained from $\bs\gamma_+$ by reversing the ordering,
\item $p_{\bs\gamma_+^\dagger}$ (resp. $q_{\bs\gamma_-}$) is the monomial of $p_\gamma$ (resp. $q_\gamma$) that is associated to the ordered tuple $\bs \gamma_+^\dagger$ (resp. $\bs\gamma_-$), 
\item $m(\bs\gamma_-) = m(\gamma_{-,i_1})\dots m(\gamma_{-,i_{k_-}})$ , and 
\item $|\unparametrizedModulispace{g}{\gamma_+}{\gamma_-}{A} |$ is the signed count of elements in the moduli space based on the coherent orientation $o_{ht}$ and is set to be $0$ when the virtual dimension is not $0$.
\ee 
Note that Formula~\eqref{formula: H_g} does not depend on the choice of representatives $\bs \gamma_\pm$ of $[\bs\gamma_\pm]$ by Corollary~\ref{cor: switching ends}.

\begin{remark}
The sign correction of $\Ha$ as mentioned in the abstract is the usage of $p_{\bs\gamma_+^\dagger}$ over $p_{\bs\gamma_+}$. 
\end{remark}
To ensure that $\Ha$ is well-defined, one needs to perturb the moduli space because the multiply covered curves are not transversely cut out in general. 
Several versions of perturbation theories fit or can be generalized to fit this setting, including but not limited to  \cite{pardon2019contact, bao2015semi, ishikawa2018construction, hofer2007general}. Transversality is far beyond the scope of this paper.
\begin{theorem}\label{theorem: main}
Suppose that the moduli spaces are transversely cut out after some perturbation. We have the product 
$$\Ha \cdot \Ha = 0.$$ 
\end{theorem}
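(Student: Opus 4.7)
The strategy is to split $\Ha\cdot\Ha$, after normal-ordering via the super-commutation relation~\eqref{formula: graded commutative}, into a ``non-contracted'' part and a ``contracted'' part, and show that each vanishes separately. The non-contracted part vanishes by parity; the contracted part vanishes by identifying its terms with boundary points of compact oriented $1$-dimensional moduli spaces and invoking that a compact oriented $1$-manifold has trivial signed boundary count.

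A direct parity calculation from the index formula and the grading $|\gamma|=\mu_{\op{CZ},\tau}(\gamma)+n-1\pmod 2$ shows that every monomial $q_{\bs\gamma_-}p_{\bs\gamma_+^\dagger}$ corresponding to a virtual-dimension-zero moduli space $\unparametrizedModulispace{g}{\gamma_+}{\gamma_-}{A}$ has total degree $\sum|\gamma_{-,i}|+\sum|\gamma_{+,i}|\equiv 1\pmod 2$. Hence $\Ha$ is of odd $\Z_2$-degree, so its super-commutative square vanishes; this handles the non-contracted part of $\Ha\cdot\Ha$, as the contribution from an ordered pair $(\mathcal M_1,\mathcal M_2)$ to any fixed output monomial cancels the contribution from $(\mathcal M_2,\mathcal M_1)$, and the diagonal terms $\mathcal M_i\cdot \mathcal M_i$ vanish individually by super-anticommutativity.

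For the contracted part, each term arises from choosing two virtual-dimension-zero moduli spaces $\mathcal M_1=\unparametrizedModulispace{g_1}{\gamma_+^{(1)}}{\gamma_-^{(1)}}{A_1}$ and $\mathcal M_2=\unparametrizedModulispace{g_2}{\gamma_+^{(2)}}{\gamma_-^{(2)}}{A_2}$ together with a contraction matching some $\tau\geq 1$ of the positive punctures of $\mathcal M_1$ with matching negative punctures of $\mathcal M_2$. By the SFT compactness theorem, each such broken configuration $(u_1,u_2)$ arises as a codimension-one boundary point of a one-dimensional glued moduli space $\mathcal M_3=\unparametrizedModulispace{g_3}{\gamma_+^{(3)}}{\gamma_-^{(3)}}{A_3}$ whose topology is determined by the gluing. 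Lemma~\ref{lemma: boundary orientation}, applied with $o=o_{ht}$ (so $\gconst=1$) and with $\op{ind}\T_{u_1}=1$ (since $\op{virdim}\mathcal M_1=0$), prescribes the sign $-\dconst_{ht}$ with which $(u_1,u_2)$ contributes to $\partial\mathcal M_3$.

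The main obstacle is to verify that the algebraic sign produced when normal-ordering $p_{\bs\gamma_+^{(1)\dagger}}q_{\bs\gamma_-^{(2)}}$ via~\eqref{formula: graded commutative} agrees with this geometric boundary sign $-\dconst_{ht}$. This is exactly where the convention $p_{\bs\gamma_+^\dagger}$ of writing positive ends in \emph{reverse} order, introduced in~\eqref{formula: H_g}, is used: with this ordering, super-commuting the chosen contracted subword of $p$'s past the intervening $q$'s and $p$'s produces precisely the exponent of $(-1)$ appearing in $\dconst_{ht}$. I would verify the match first in the elementary case $\tau=1$ with no unmatched intermediate ends, and then extend inductively on $\tau$ and on the numbers of unmatched ends, appealing to Corollary~\ref{cor: switching ends} to normalize the ordering of $\bs\gamma_\pm^{(3)}$. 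Once the signs are matched, summing over all triples $(\mathcal M_1,\mathcal M_2,\text{matching})$ whose glued curve lies in a fixed $\mathcal M_3$ reproduces the signed count of $\partial\mathcal M_3$, which vanishes because $\mathcal M_3$ is (after perturbation) a compact oriented $1$-manifold with boundary. Summing over all $\mathcal M_3$ contributing to each fixed output monomial yields zero, which together with the vanishing of the non-contracted part proves that $\Ha\cdot\Ha=0$.
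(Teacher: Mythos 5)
Your proposal is correct and follows essentially the same route as the paper: expand $\Ha\cdot\Ha$, normal-order via~\eqref{formula: graded commutative}, and identify each contracted term with a signed boundary point of a $1$-dimensional moduli space through Lemma~\ref{lemma: boundary orientation} (with $\gconst_{ht}=1$, $(-1)^{\op{ind}\T_u}=-1$, and the $\dagger$-reversal convention absorbing $\dconst_{ht}$), so that $\sum\tilde o^b(w)=0$ finishes the argument; the sign bookkeeping you defer to an induction is exactly the content of the paper's Claim. The only cosmetic differences are that you make the vanishing of the non-contracted part explicit via the parity computation showing $\Ha$ is odd (which the paper leaves implicit), and that your labels $\mathcal M_1,\mathcal M_2$ are swapped relative to Lemma~\ref{lemma: boundary orientation}, which is harmless here since both factors have $\op{ind}\T=1$.
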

We define the differential $D: \mathfrak W \to \mathfrak W$ by $Df = [\Ha, f]$ for all $f \in \mathfrak W$, and 
the homology algebra $H_*(\mathfrak W, D)$ to be the homology of $(\mathfrak W, D)$.
Before proving the theorem, we first revisit  the example in Figure 4 of \cite{eliashberg2000introduction} with some modifications. 
\begin{example}
Suppose $$\Ha = a q_1 q_2 p_4 \hbar^{-1} + bp_3 p_2 p_1 \hbar^{-1},$$
where: 
\begin{itemize}
    \item $a = |\mathcal M^0(\gamma_4;\gamma_1\gamma_2) |$ with $|\gamma_4| + |\gamma_1| + |\gamma_2| = 1 \mod 2$.
    \item $b =  |\mathcal M^0(\gamma_1\gamma_2\gamma_3; \emptyset)|$ with $|\gamma_1| + |\gamma_2| + |\gamma_3| = 1 \mod 2$. 
    \item We assume $m(\gamma_i) = 1$, for all $i\in \{1,2,3,4\}$. 
    \item We write $q_i$ and $p_i$ for $q_{\gamma_i}$ and $p_{\gamma_i}$ respectively.
    \item  We drop the variable $e^A$ for $A \in H_2(M;Z)$. 
\end{itemize}
An explicit calculation yields 
\begin{align*}
    \Ha \cdot \Ha = & (-1)^{d_2 + d_2 d_3 + d_2 d_4 + d_3 d_4}ab q_2 p_4 p_3 p_2  \hbar^{-1} \\
                    & + (-1)^{d_1 + d_1 d_2 + d_1 d_4 + d_3 d_4}ab q_1 p_4 p_3 p_1  \hbar^{-1}   \\
                   & + (-1)^{d_3 d_4}ab p_4 p_3 ,
\end{align*}
where $d_i = |\gamma_i|$, and two monomials that are multiples of $q_1 q_2 p_4 p_3 p_2 p_1$ cancel out. 
The three terms that appear in $\Ha \cdot \Ha$ correspond to the three gluings of the moduli spaces  $\mathcal M_{I} = \mathcal M^0(\gamma_4;\gamma_1\gamma_2)$ and $\mathcal M_{\textit{II}} = \mathcal M^0(\gamma_1\gamma_2\gamma_3; \emptyset)$ with signs (see Figure~\ref{fig: gluing}).

We verify the signs of the first term, leaving the other two terms to the reader.   
Consider the moduli space $\mathcal M_{\textit{III}} = \mathcal M^0(\gamma_2\gamma_3\gamma_4;\gamma_2)$. 
We check that the number of elements in $\partial_{\textit{I,II}}\mathcal M_{\textit{III}}$, the part of the boundary of $\mathcal M_{\textit{III}}$ that corresponds   to the gluing of the moduli spaces $\mathcal M_{\textit{I}}$ and $\mathcal M_{\textit{II}}$ along $\gamma_1$, equals $(-1)^{d_2 + d_2 d_3 + d_2 d_4 + d_3 d_4 + 1}ab$, which is $(-1)$ times the coefficient of $q_2p_4p_3p_2 \hbar^{-1}$. 
\begin{align*}
 |\partial_{\textit{I,II}} \mathcal M_{\textit{III}}|  & = \sum_{[w] \in \partial_{\textit{I,II}} \mathcal M_{\textit{III}}} \tilde o^b(w) \\
 & = \sum_{[v]\in \mathcal M(231;\emptyset)}  \sum_{[u] \in \mathcal M(4;12)} (-1)^{d_2 + d_3 + 1} \tilde o(u) \tilde o(v) \\
  & = \sum_{[v]\in \mathcal M(123;\emptyset)} \sum_{[u] \in \mathcal M(4;12)}  (-1)^{d_2 + d_3 + 1 + d_1(d_2 + d_3)} \tilde o(u) \tilde o(v) \\
 & = (-1)^{ d_2 + d_2 d_3 + d_2 d_4 + d_3 d_4 + 1}   \left( \sum_{[v]\in \mathcal M(123;\emptyset)} \tilde o(v) \right) 
 \left(\sum_{[u] \in \mathcal M(4;12)} \tilde o(u) \right)\\
 & = (-1)^{ d_2 + d_2 d_3 + d_2 d_4 + d_3 d_4 + 1} ab,
\end{align*}
where we omit $\gamma'$s in the notation for the moduli spaces;
the second equality follows from Lemma~\ref{lemma: boundary orientation} with $\gconst = 1, \dconst = (-1)^{(d_2 + d_3)\cdot 1},$ and $(-1)^{\op{virdim} \mathcal M_I} = -1$, noting that the last positive end of $\mathcal M(231;\emptyset)$  matches  the first negative end of $\mathcal M(4;12)$;
the fourth equality follows from the fact that $(-1)^{d_2 + d_3 + 1 + d_1(d_2 + d_3)} = (-1)^{ d_2 + d_2 d_3 + d_2 d_4 + d_3 d_4 + 1}$, since $d_4 + d_1 + d_2 = 1 \mod 2$ and $d_1 + d_2 + d_3 = 1 \mod 2$. 
We leave the computation of the other two terms for the reader. 

\begin{figure} 
\centering
\begin{tikzpicture}[scale = 0.5, every node/.style={scale=0.6}]

\draw (0,1) ellipse (1 and 0.2);
\draw (2,4) ellipse (1 and 0.2);
\draw (4,1) ellipse (1 and 0.2);

\draw (0,-1) ellipse (1 and 0.2);
\draw (4,-1) ellipse (1 and 0.2);
\draw (8,-1) ellipse (1 and 0.2);

\draw (-1,1) .. controls (-1,2) and (1,3) .. (1,4);
\draw (5,1) .. controls (5,2) and (3,3) .. (3,4);
\draw (1,1) .. controls (1, 2) and (3, 2) .. (3,1);

\draw (-1,-1) .. controls (-1,-5) and (9,-5) .. (9,-1);
\draw (1,-1) .. controls (1, -2) and (3, -2) .. (3,-1);
\draw (5,-1) .. controls (5,-2) and (7,-2) .. (7,-1);

\node[] at (2, 4.5) {$\gamma_4$};
\node[] at (0, 0) {$\gamma_1$};
\node[] at (4, 0) {$\gamma_2$};
\node[] at (8, 0) {$\gamma_3$};

\end{tikzpicture}

\caption[]{There are three gluings that correspond to three terms in $\Ha \cdot \Ha$: gluing along $\gamma_1$, gluing along $\gamma_2$, and simultaneously gluing along $\gamma_1$ and $\gamma_2$.}
\label{fig: gluing}
\end{figure}
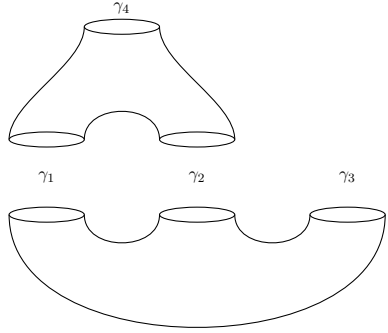
\end{example}

\begin{proof}[Proof of Theorem~\ref{theorem: main}]
Note that 
\[ 
\Ha = \sum_{g \geq 0} \sum_{[\bs \gamma_+], [\bs \gamma_-]} \sum_{A \in H_2(M)} \sum_{[u]\in \unparametrizedModulispace{g}{\gamma_+}{\gamma_-}{A}} \tilde o(u)  \ q_{\bs \gamma_-}  p_{\bs\gamma_+^\dagger}   e^A \hbar^{g-1} ,
\]
and hence, 
\begin{equation}\label{eqn: H H}
    \Ha \cdot \Ha = \sum  \tilde o(u') \tilde o(u) q_{\bs \gamma'_-}  p_{\bs{\gamma'}_+^\dagger} q_{\bs \gamma_-}  p_{\bs\gamma_+^\dagger}    e^{A'+A} \hbar^{g'+g-2},
\end{equation} 
where $\tilde o(u), \tilde o(u') \in \{\pm 1\}$,
and the summation is over $g, g' \geq 0$, unordered tuples of good Reeb orbits $[\bs\gamma_+], [\bs\gamma_-], [\bs\gamma'_+], [\bs\gamma'_-]$, homology classes $A, A' \in H_2(M)$,  and  $[ u ]  \in \unparametrizedModulispace{g}{\gamma_+}{\gamma_-}{A}$ and $ [ u' ] \in \unparametrizedModulispace{g'}{\gamma_+'}{\gamma_-'}{A'}.$ 

It is convenient to choose an ordering for all Reeb orbits. Then, Equation~\eqref{eqn: H H} can be simplified by moving $q_{\bs \gamma'_-}$ to the left of $p_{\bs\gamma_+^\dagger}$ using Equation~\eqref{formula: graded commutative} and subsequently sorting the $q$ terms and the $p$ terms based on the ordering of Reeb orbits.

For any sorted tuples $\bs \gamma''_-$ and ${\bs\gamma''}_+$, $A'' \in H_2(M)$, and $g'' \geq 0$, the coefficient in front of $q_{\bs \gamma''_-}  p_{{\bs\gamma''}_+^\dagger} e^{A''} \hbar^{g''-1}$ within $\Ha \cdot \Ha$ is given by:
\begin{equation}\label{formula: a term of H H}
    \sum \epsilon(u',u) \tilde o(u') \tilde o(u)
\end{equation}
where:
\begin{enumerate}
    \item The sum is taken over all triples $([u], [u'], \vartheta)$ satisfying:
        \begin{enumerate}
            \item $[u] \in \unparametrizedModulispace{g}{\gamma_+}{\gamma_-}{A}$ for some $g, \bs \gamma_+, \bs \gamma_-, A$.
            \item $[u'] \in \unparametrizedModulispace{g'}{\gamma'_+}{\gamma'_-}{A'}$ for some $g', \bs \gamma'_+, \bs \gamma'_-, A'$.
            \item $\vartheta$ is a bijective map from a subset $G_- \subset \{1,\dots, k_-\}$ to a subset $G_+' \subset \{1, \dots, k_+'\}$ such that the $i$-th element of $\bs \gamma_-$ is equal to the $\vartheta(i)$-th element of $\bs \gamma_+'$, for all $i \in G_-$, where $k_-$ is the length of the tuple $\bs \gamma_-$ and $k_+'$ is the length of the tuple $\bs \gamma'_+$.
            \item If we glue $u$ and $u'$ along $\vartheta$ and reorder the punctures, if necessary, we obtain a curve $w = u \sharp_\vartheta u'$ that satisfies $[w] \in \unparametrizedModulispace{g''}{\gamma_+''}{\gamma_-''}{A''}$.
        \end{enumerate}
    \item The number $\epsilon(u',u) \in \Q$ arises from the algebraic operation of moving $q_{\bs \gamma'_-}$ to the left of $p_{\bs\gamma_+^\dagger}$ using Equation~\eqref{formula: graded commutative} and subsequently sorting the $q$ terms and the $p$ terms.
\end{enumerate}
We state the following claim:
\begin{claim}
    The equation $\sum \epsilon(u',u) \tilde o(u') \tilde o(u) = - \sum \tilde o^b(w)$ holds, where the left-hand side represents the term in Formula~\eqref{formula: a term of H H}, and the summation on the right-hand side is taken over all $[w] \in \partial \unparametrizedModulispace{g''}{\gamma''_+}{\gamma''_-}{A''}$.
\end{claim}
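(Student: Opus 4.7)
The plan is to verify the identity term-by-term. Each boundary point $[w] \in \partial \unparametrizedModulispace{g''}{\gamma_+''}{\gamma_-''}{A''}$ arises from a unique triple $([u], [u'], \vartheta)$, so it suffices to establish, for each such triple, the identity $\epsilon(u', u) \tilde o(u') \tilde o(u) = -\tilde o^b(w)$; summing over triples then immediately yields the claim. Note that since $\Ha$ only counts unparametrized moduli spaces of virtual dimension zero, the corresponding parametrized moduli spaces have virtual dimension one, and hence $\op{ind} \T_u$, $\op{ind} \T_{u'}$ are both odd.

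First I would compute $\epsilon(u', u)$ explicitly. Expanding
$$q_{\bs\gamma'_-} p_{\bs{\gamma'}_+^\dagger} \, q_{\bs\gamma_-} p_{\bs\gamma_+^\dagger}$$
and using \eqref{formula: graded commutative} to move the block $q_{\bs\gamma_-}$ past $p_{\bs{\gamma'}_+^\dagger}$, the pairs indexed by $\vartheta$ contract to $\hbar/m(\gamma)$ while all other adjacent $p$-$q$ exchanges produce a graded sign $(-1)^{|\gamma|\cdot|\gamma'|}$. Sorting the remaining $q$- and $p$-monomials into the normal orderings of $\bs\gamma''_\pm$ then yields $\epsilon(u', u)$ as an explicit product of $(-1)^{|\cdot|\cdot|\cdot|}$ factors, indexed by the pairs of ends that cross during the commutation and sorting.

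Next I would compute $\tilde o^b(w)$ by reducing the partial gluing $\T_u \sharp_\vartheta \T_{u'}$ to a complete one. The steps are: (i) permute the negative ends of $\T_u$ so the $\vartheta$-glued ones come first, and the positive ends of $\T_{u'}$ so the $\vartheta$-glued ones come last, tracking the signs of each swap via Corollary~\ref{cor: switching ends}; (ii) take disjoint unions with trivial CR tuples along the remaining unglued ends to convert to a complete gluing as in Figure~\ref{fig: adding trivial cylinders}, recording signs via Lemma~\ref{lemma: disjoint union}; (iii) apply the complete-gluing formula of Lemma~\ref{lemma: boundary orientation}, which for $o_{ht}$ contributes $\gconst \dconst (-1)^{\op{ind}\T_u} = -\dconst$ because $\gconst_{ht} = 1$ and $\op{ind}\T_u$ is odd; (iv) restore the normal ordering of the ends of $\T_w$ to match $\bs\gamma''_\pm$ via Corollary~\ref{cor: switching ends}. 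The cumulative effect expresses $\tilde o^b(w) = \pm\tilde o(u)\tilde o(u')$ times a product of $\Z_2$-grading signs.

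The main obstacle is to verify that the two sign products match. Both are built from the same combinatorial data — Reeb orbits with $\Z_2$-gradings, manipulated by commutations or swaps — and each algebraic commutation in the calculation of $\epsilon(u', u)$ corresponds to a precise end-swap in the calculation of $\tilde o^b(w)$ with the identical sign contribution. The extra overall $-1$ on the geometric side is supplied jointly by the outward-normal convention in \eqref{formula: boundary orientation} and by the $(-1)^{\op{ind}\T_u} = -1$ factor in Lemma~\ref{lemma: boundary orientation}. The computation following Theorem~\ref{theorem: main} illustrates the matching in the simplest non-trivial example, and the general case follows by the same reasoning applied to arbitrary tuples $\bs\gamma_\pm$, $\bs\gamma'_\pm$ and subsets $G_-, G'_+$. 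The difficulty is entirely bookkeeping: when $G_-$ and $G'_+$ are non-contiguous, the permutations become unwieldy, but once both sign formulas are written out explicitly the identity can be read off by direct comparison.
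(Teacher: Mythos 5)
Your proposal takes essentially the same route as the paper's proof: decompose both the algebraic sign $\epsilon(u',u)$ and the geometric boundary sign $\tilde o^b(w)$ into step-wise contributions from commutations, end-permutations, disjoint unions with trivial cylinders, and the complete-gluing formula, then match them factor by factor and identify the residual overall $-1$. The paper does this by introducing factors $\epsilon_1,\dots,\epsilon_6$ on the algebraic side and $\varepsilon_1,\dots,\varepsilon_4,\varepsilon_6$ on the geometric side, then observing $\epsilon_i = \varepsilon_i$ for $i\ne 4,5$ and $\epsilon_4 = -\varepsilon_4$; your steps (i)--(iv) correspond closely to this decomposition. Two points where your sketch is imprecise. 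First, your opening assertion that each boundary point $[w]$ comes from a \emph{unique} triple $([u],[u'],\vartheta)$, together with the per-triple identity $\epsilon(u',u)\tilde o(u')\tilde o(u) = -\tilde o^b(w)$, is not quite correct when the glued Reeb orbits are multiply covered: $\epsilon(u',u)$ is a rational number (not a sign) because the commutator $[p_\gamma,q_\gamma]=\hbar/m(\gamma)$ carries a factor $1/m(\gamma)$, and this is balanced against an over-count in the triples coming from simultaneously rotating the asymptotic markers of $u$ and $u'$. The paper isolates this as the positive factor $\epsilon_5\in\Q^{>0}$; your argument needs this extra accounting to go through, and the "bijection" framing would silently break for $m(\gamma)>1$. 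Second, your attribution of the overall $-1$ to a \emph{joint} contribution of the outward-normal convention in \eqref{formula: boundary orientation} and the $(-1)^{\op{ind}\T_u}$ factor in Lemma~\ref{lemma: boundary orientation} double-counts: the derivation of Lemma~\ref{lemma: boundary orientation} already folds the outward-normal convention into the $(-1)^{\op{ind}\T_u}$ factor (it arises precisely from $\partial_s([w])\wedge n \approx -\widehat{\partial_s([u])}\wedge\widehat{\partial_s([v])}$), so there is a single source of the sign, namely $\op{ind}\T_u$ being odd since $\op{virdim}\mathcal M_1 = 0$. With these two points tightened the argument matches the paper's.
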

Assuming this claim, we can establish the theorem, as $\sum \tilde o^b(w) = 0$.
\end{proof} 
\begin{proof}[Proof of the Claim:]
This is a straightforward calculation. To initiate the proof, we introduce some notations. We define the index set $\{1,\dots, k_-\}$ as $G_- \sqcup N_-$, where $G_-$ represents the set of punctures involved in gluing, and $N_-$ denotes the set of non-gluing punctures. Similarly, we denote $\{1,\dots, k_+'\}$ as $G_+' \sqcup N_+'$.
Consider the term $q_{\bs \gamma'_-}  p_{\bs{\gamma'}_+^\dagger} q_{\bs \gamma_-} p_{\bs\gamma_+^\dagger}$. We move the $q_{\bs \gamma_-}$ term across the $p_{\bs{\gamma'}_+^\dagger}$ in steps. Through this process, the monomial becomes a polynomial by Formula~\eqref{formula: graded commutative}, and we only focus on the term that is prescribed by $\vartheta$, meaning when a $p$ is next to a $q$ term and they are matched in $\theta$, we cancel them; otherwise, they are graded commutative. 

We now analyze the sign $\epsilon(u',u)$ in several steps:
\begin{enumerate}[label = (\arabic*)]
    \item \label{step S1} We move the $q$ terms labeled by $N_-$ to the end of $q_{\bs \gamma_-}$, resulting in a sign denoted by $\epsilon_1$.
    \item \label{step S2} Similarly, we move the $p$ terms labeled by $N'_+$ to the end of $p_{\bs{\gamma'}_+^\dagger}$, obtaining a sign $\epsilon_2$.
    \item \label{step S3} Next, we sort the $p$ terms labeled by $G'_+$ in reverse order according to $\vartheta$, yielding a sign $\epsilon_3$.
    \item \label{step S4} We further move the $p$ terms labeled by $N'_+$ to the end of the monomial, resulting in a sign $\epsilon_4 = (-1)^{\sum_{i\in N'_+} |\gamma'_{-,i}|}$.
    \item \label{step S5} We cancel out the $p$ terms labeled by $G_+'$ with the corresponding $q$ terms labeled by $G_-$ and get a monomial with $q$ terms before the $p$ terms together with a positive factor $\epsilon_5 \in \Q^{>0}$ due to the multiplicity of the Reeb orbits.
    \item \label{step S6} Finally, we sort the $p$ terms and $q$ terms respectively, and get a sign $\epsilon_6$.
\end{enumerate}
In summary, we have $\epsilon(u',u) = \epsilon_1 \epsilon_2 \epsilon_3 \epsilon_4 \epsilon_5 \epsilon_6$. Next, we calculate the sign that arises from gluing $u$ and $u'$ according to $\vartheta$ using Lemma~\ref{lemma: boundary orientation}:
\begin{enumerate}[label = (\arabic*)]
    \item We reorder the negative ends of $u$ by moving the punctures labeled by $N_-$ to the end, while still denoting the curve as $u$. This results in a sign denoted by $\varepsilon_1$.
    \item Similarly, we reorder the positive ends of $u'$ by moving the punctures labeled by $N_+'$ to the front, while still denoting the curve as $u'$. This results in a sign $\varepsilon_2$.
    \item We sort the positive punctures of $u'$ labeled by $G'_+$ according to $\vartheta$, while still denoting the curve as $u'$. This results in a sign $\varepsilon_3$.
    \item We glue the two curves $u$ and $u'$ using Lemma~\ref{lemma: boundary orientation}, and obtain a sign $\varepsilon_4$.
    \item Finally, we sort the positive punctures and negative punctures of the glued curve respectively, getting a sign denoted by $\varepsilon_6$.
\end{enumerate}
It is evident that $\epsilon_1 = \varepsilon_1$, $\epsilon_2 = \varepsilon_2$, $\epsilon_3 = \varepsilon_3$, $\epsilon_4 = -\varepsilon_4$, and $\epsilon_6 = \varepsilon_6$. 
The factor $\epsilon_5$ deals with the over-counting due to simultaneously rotating the asymptotic markers of $u$ and $u'$.
This completes the proof of the claim.
\end{proof}

\begin{corollary}[Contact homology \cite{pardon2019contact, bao2015semi, ishikawa2018construction}]
Let $\mathfrak A$ be the differential graded commutative algebra generated freely by all good Reeb orbits over $\Q[H_2(M)]$. 
Let $\partial: \mathfrak A \to \mathfrak A$ be the differential defined over generators by 
\begin{equation} \label{eqn: partial}
\partial \gamma_+=\sum_{[\bs\gamma_-]}\sum_{A\in H_2(M;\Z)} \frac{1}{m(\bs \gamma_-)} | \mathcal M^0 (\gamma_+;\bs \gamma_-;A) | \cdot  e^A    \gamma_{-,1} \dots \gamma_{-,k},
\end{equation}
where $\bs \gamma_- = \gamma_{-,1}\dots \gamma_{-,k}$, and $m(\bs \gamma_-) = m(\gamma_{-,1}) \dots m(\gamma_{-,k})$.
Then $\partial^2 = 0$.
\end{corollary}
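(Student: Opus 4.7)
The strategy is to adapt the proof of Theorem~\ref{theorem: main} to the simpler setting of one-positive-puncture, genus-zero moduli spaces. First I would observe that $\partial^2 \gamma_+$, expanded via the graded Leibniz rule, is a finite sum over pairs of genus-zero moduli spaces $\bigl(\unparametrizedModulispace{0}{\gamma_+}{\gamma_-}{A},\ \unparametrizedModulispace{0}{\gamma_{-,i}}{\gamma''_-}{A''}\bigr)$ indexed by a choice of $i \in \{1,\ldots,k\}$, with coefficients involving the product $\frac{1}{m(\bs\gamma_-)\,m(\bs\gamma''_-)}$ and signs coming from the graded Leibniz rule applied to $\gamma_{-,1}\cdots \gamma_{-,k}$. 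Each such pair corresponds geometrically to a two-level broken curve, and appears as a boundary point of a unique compactified one-dimensional moduli space $\mathcal M^1(\gamma_+;\widetilde{\bs\gamma}_-;A+A'')$, where $\widetilde{\bs\gamma}_-$ is obtained from $\bs\gamma_-$ by replacing its $i$-th entry with $\bs\gamma''_-$.

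Next I would apply Lemma~\ref{lemma: boundary orientation} to compute the boundary orientation assigned to each such broken curve. Here the top-level moduli space $\mathcal M_1$ has $\op{virdim}=0$, so $\op{ind}\T_u = 1$, which means the gluing map reverses the boundary orientation. For $o_{ht}$ the gluing sign is $\gconst=1$ and the disjoint-union sign $\dconst$ is given by the formula in Section~3; combining these with the Leibniz-rule sign and the reorderings arranged via Corollary~\ref{cor: switching ends}, I would verify that the contribution to the signed boundary count exactly matches the corresponding term in the Leibniz expansion of $\partial^2\gamma_+$, up to a single overall sign. Summing the vanishing identity $\sum_{[w]\in \partial \mathcal M^1}\tilde o^b(w) = 0$ over all $(\widetilde{\bs\gamma}_-,\widetilde A)$ then yields $\partial^2\gamma_+ = 0$ on generators, and hence $\partial^2 = 0$ on all of $\mathfrak A$ by the derivation property.

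The main obstacle is the bookkeeping of multiplicities and signs, closely analogous to the $\epsilon_5$--$\epsilon_6$ analysis in the proof of the Claim inside Theorem~\ref{theorem: main}. The factor $\frac{1}{m(\bs\gamma_-)}$ in the definition of $\partial$ arises from the combinatorics of applying the graded Leibniz rule to $q$-monomials with possibly repeated factors, and must be reconciled with the multiplicity data tracked by the gluing. A second, essentially equivalent, approach is to isolate the genus-zero, $p$-linear piece $\mathfrak h$ of $\Ha$ and extract the coefficient of $\hbar^{-1}p_{\gamma_+}$ from $\Ha\cdot\Ha=0$ directly, viewing $\partial$ as the leading-order piece of $\hbar^{-1}[\Ha,\,\cdot\,]$ under the identification $q_\gamma\leftrightarrow\gamma$; this route, however, requires additional care to separate out the genuine two-level contributions from spurious terms involving higher-genus pieces of $\Ha$ that land on the same monomial. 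Approach one is therefore the more direct route.
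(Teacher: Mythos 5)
The paper's proof is a single line: it derives the corollary from Theorem~\ref{theorem: main} by restricting to the $g=0$ term and to the coefficient of $p$-linear monomials. Your preferred ``approach one'' is correct and closely mirrors the internal Claim in the proof of Theorem~\ref{theorem: main}, but it re-derives the boundary-orientation argument from scratch in the contact-homology setting, duplicating work the paper already does in the more general Weyl-algebra framework. Your ``approach two'' is in fact the paper's argument, and the worry you raise about it is unfounded: the $\hbar$-grading isolates the contact-homology terms automatically, with no additional care needed. Concretely, a genus-$g$ curve contributes $\hbar^{g-1}$ in $\Ha$, and each commutator $[p_\gamma,q_\gamma]=\hbar/m(\gamma)$ produces one power of $\hbar$; gluing curves of genera $g,g'$ along $\tau$ Reeb orbits therefore yields a coefficient of $\hbar^{(g+g'+\tau-1)-1}$, correctly matching the genus $g+g'+\tau-1$ of the glued curve. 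Extracting the coefficient of $q_{\bs\gamma''_-}\,p_{\gamma_+}\hbar^{-1}$ forces $g=g'=0$ and $\tau=1$, and having a single $p$ in the result further forces each of $u$ and $u'$ to have exactly one positive puncture. This counting is exactly what ``restricting to $g=0$ and linear $p$'' means, and it makes the derivation immediate rather than delicate. So your approach one buys a self-contained geometric proof at the cost of repeated bookkeeping, while the paper's route (your approach two) buys brevity by letting the $\hbar$-grading do the separation for free.
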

\begin{proof}
This follows from Theorem~\ref{theorem: main} by restricting to the term $g = 0$ and linear  $p_{\bs\gamma_+^\dagger}$.
\end{proof}
\begin{remark}
If one uses $o_{bm}$, then $\partial$ should be defined as 
\begin{equation} \label{eqn: partial}
\partial \gamma_+=\sum_{[\bs\gamma_-]}\sum_{A\in H_2(M;\Z)} \frac{1}{m(\bs \gamma_-)} | \mathcal M^0 (\gamma_+;\bs \gamma_-;A) | \cdot  e^A    \gamma_{-,k} \dots \gamma_{-,1}
\end{equation}
as in \cite{bao2015semi}.
\end{remark}

Lastly, we mention the follow result:
\begin{proposition}
Different choices of capping CR tuples $\T^\pm _S$ and the capping orientations $o(\T^\pm _S)$ produce isomorphic SFT.
\end{proposition}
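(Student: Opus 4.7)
The plan is to exhibit a graded algebra automorphism $\Phi$ of $\mathfrak W$ that intertwines the two differentials coming from two different choices of capping data. Let $o_1$ and $o_2$ denote the two coherent orientation systems. By Theorem~\ref{thm: ht main}, they satisfy identical gluing and disjoint-union formulas (same $\gconst_{ht} = 1$, same $\dconst_{ht}$) and both agree with $\ocan$ on no-puncture CR tuples. For every CR tuple $\T$, define the sign $\eta(\T) := o_2(\T)/o_1(\T) \in \{\pm 1\}$, which is well-defined since $o_1(\T)$ and $o_2(\T)$ are both orientations of the same determinant line $\det \T$.

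The sign $\eta$ is multiplicative under both gluing and disjoint union (because the extra signs in the two orientation systems match and cancel in the ratio), and $\eta(\T) = 1$ whenever $\T$ has no punctures. Taking the ratio in the defining equation~\eqref{equation: orientation of T} and using that the right-hand side is setup-independent, together with the disjoint-union formula~\eqref{equation: orientation of bs S}, forces
$$\eta(\T) = \prod_{i=1}^{k_+} \epsilon_{S^+_i}\prod_{j=1}^{k_-} \epsilon_{S^-_j},$$
where $\epsilon_S := \eta(\T^+_S)$ is a sign attached to each admissible loop $S$. Independence of this formula from the specific capping tuple chosen for $S$ follows by gluing any two positive capping CR tuples $\T^+_{S,1}$ and $\T^+_{S,2}$ with a common negative capping tuple and using that $\eta$ equals $1$ on the resulting closed surfaces.

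Pulling $\eta$ through Formula~\eqref{formula: definition of orientation of moduli space} gives, for every transversely cut out $[u]\in \unparametrizedModulispace{g}{\gamma_+}{\gamma_-}{A}$,
$$\tilde o_2(u) = \Bigl(\prod_i \epsilon_{\gamma_{+,i}}\prod_j \epsilon_{\gamma_{-,j}}\Bigr)\,\tilde o_1(u),$$
where $\epsilon_\gamma := \epsilon_{S_\gamma}$. I would then define $\Phi : \mathfrak W \to \mathfrak W$ on generators by $\Phi(p_\gamma) = \epsilon_\gamma p_\gamma$, $\Phi(q_\gamma) = \epsilon_\gamma q_\gamma$, $\Phi(e^A) = e^A$, $\Phi(\hbar) = \hbar$. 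Since $\epsilon_\gamma^2 = 1$ and $p_\gamma, q_\gamma$ are scaled by the same sign, the graded commutation relation~\eqref{formula: graded commutative} and every other relation in $\mathfrak W$ are preserved, so $\Phi$ is a well-defined graded algebra automorphism. Comparing $\Ha$ between the two setups via the above sign comparison and Formula~\eqref{formula: H_g} gives $\Phi(\Ha_1) = \Ha_2$, hence $\Phi$ intertwines $D_1 = [\Ha_1, \cdot]$ with $D_2 = [\Ha_2, \cdot]$ and induces the desired isomorphism of SFT.

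The main obstacle is the careful verification of the product formula for $\eta(\T)$ and its independence of the specific capping tuples chosen. This requires reducing any CR tuple to a closed surface via capping and tracking that the signs $\dconst_{ht}$ from the iterated disjoint union steps in Formula~\eqref{equation: orientation of bs S} cancel in the ratio $o_2/o_1$, together with using the gluing isomorphism of Lemma~\ref{lemma: gluing} to compare different capping tuples through a common closed reference. Once this structural lemma is established, the rest of the argument is routine bookkeeping.
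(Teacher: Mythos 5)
Your proposal takes essentially the same approach as the paper: both define a sign $\epsilon_S$ measuring the discrepancy between the two coherent systems at the positive capping tuple for $S$, and both realize the isomorphism by rescaling the generators $p_\gamma, q_\gamma$ by $\epsilon_{S_\gamma}$. Your $\epsilon_S = \eta(\T^+_S) = o_2(\T^+_S)/o_1(\T^+_S)$ agrees with the paper's definition via $\epsilon_S\,o(\T^+_S)\sharp o'({\T'_S}^-)=\ocan(\T^+_S\sharp{\T'_S}^-)$ (this follows from the gluing axiom applied to $o_2$ on $\T^+_S\sharp{\T'_S}^-$), and you simply spell out the multiplicativity of $\eta$ under gluing and disjoint union that the paper leaves implicit.
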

\begin{proof}
Let ${\T'_S}^\pm$ and $o'({\T'_S}^\pm)$ be a different choices of capping CR tuples and their capping orientation. Let $\epsilon_S \in \{\pm 1\}$ be defined by $\epsilon_S o(\T^+_S) \sharp o'({\T'_S}^-) = \ocan( \T^+_S \sharp {\T'_S}^-)$. Then the isomorphism from $(\mathfrak W, D) \to (\mathfrak W, D')$ is defined by sending generators $p_\gamma, q_\gamma \mapsto \epsilon_S p_\gamma, \epsilon_S q_\gamma$ for all $\gamma$, where $S$ is the loop of symmetric matrices associated to $\gamma$. 
\end{proof}

\section*{Acknowlegements}
We thank Ko Honda, Russell Avdek, and Fan Zheng for helpful discussions.
\printbibliography

@misc{hutchings2022s1equivariant,
      title={$S^1$-equivariant contact homology for hypertight contact forms}, 
      author={Michael Hutchings and Jo Nelson},
      year={2022},
      eprint={1906.03457},
      archivePrefix={arXiv},
      primaryClass={math.SG}
}

@phdthesis{bourgeois2002morse,
  title={A Morse-Bott approach to contact homology},
  author={Bourgeois, Fr{\'e}d{\'e}ric},
  year={2002},
  school={stanford university}
}

@article {eliashberg2000introduction,
    AUTHOR = {Eliashberg, Y. and Givental, A. and Hofer, H.},
     TITLE = {Introduction to symplectic field theory},
      NOTE = {GAFA 2000 (Tel Aviv, 1999)},
   JOURNAL = {Geom. Funct. Anal.},
  FJOURNAL = {Geometric and Functional Analysis},
      YEAR = {2000},
    NUMBER = {Special Volume, Part II},
     PAGES = {560--673},
      ISSN = {1016-443X},
     CODEN = {GFANFB},
   MRCLASS = {53D45 (53D40)},
  MRNUMBER = {1826267 (2002e:53136)},
MRREVIEWER = {Kai Cieliebak},
    %   DOI = {10.1007/978-3-0346-0425-3_4},
    %   URL = {http://dx.doi.org/10.1007/978-3-0346-0425-3_4},
}

@article{hofer2007general,
  title={A general Fredholm theory I: A splicing-based differential geometry},
  author={Hofer, Helmut W and Wysocki, Kris and Zehnder, Eduard},
  journal={Journal of the European Mathematical Society},
  volume={9},
  number={4},
  pages={841--876},
  year={2007}
}

@article{ishikawa2018construction,
  title={Construction of general symplectic field theory},
  author={Ishikawa, Suguru},
  journal={arXiv preprint arXiv:1807.09455},
  year={2018}
}

@article{pardon2019contact,
  title={Contact homology and virtual fundamental cycles},
  author={Pardon, John},
  journal={Journal of the American Mathematical Society},
  volume={32},
  number={3},
  pages={825--919},
  year={2019}
}

@article{hutchings2009gluing,
  title={Gluing pseudoholomorphic curves along branched covered cylinders II},
  author={Hutchings, Michael and Taubes, Clifford Henry},
  journal={Journal of Symplectic Geometry},
  volume={7},
  number={1},
  pages={29--133},
  year={2009},
  publisher={International Press of Boston}
}

@phdthesis{schwarz1995cohomology,
  title={Cohomology operations from {$S^1$}-cobordisms in {F}loer homology},
  author={Schwarz, Matthias},
  year={1995},
  school={ETH Zurich}
}

@book{mcduff2012j,
  title={J-holomorphic curves and symplectic topology},
  author={McDuff, Dusa and Salamon, Dietmar},
  volume={52},
  year={2012},
  publisher={American Mathematical Soc.}
}

@article{bao2015semi,
  title={Semi-global Kuranishi charts and the definition of contact homology},
  author={Bao, Erkao and Honda, Ko},
  journal={arXiv preprint arXiv:1512.00580},
  year={2015}
}

@article{bao2018definition,
  title={Definition of cylindrical contact homology in dimension three},
  author={Bao, Erkao and Honda, Ko},
  journal={Journal of Topology},
  volume={11},
  number={4},
  pages={1002--1053},
  year={2018},
  publisher={Wiley Online Library}
}

@article{bourgeois2004coherent,
  title={Coherent orientations in symplectic field theory},
  author={Bourgeois, Fr{\'e}d{\'e}ric and Mohnke, Klaus},
  journal={Mathematische Zeitschrift},
  volume={248},
  number={1},
  pages={123--146},
  year={2004},
  publisher={Springer}
}

@BOOK{fooo,
  author = {Fukaya, K. and Oh, Y.-G. and Ohta, H. and Ono, K.},
  year = 2009,
  title = {{L}agrangian {I}ntersection {F}loer {T}heory: {A}nomaly and {O}bstruction.
          Parts {I} and {II}.},
  series = {{AMS/IP} Studies in Advanced Mathematics},
  volume = {46.1 and 46.2},
  publisher = {American Mathematical Society},
  address = {Providence, RI}
}
\end{document}